\crefname{equation}{}{}
\Crefname{equation}{Equation}{Equations}
\crefname{lem}{lemma}{lemmas}
\Crefname{lem}{Lemma}{Lemmas}
\crefname{assum}{assumption}{assumptions}
\Crefname{assum}{Assumption}{Assumptions}
\pgfplotsset{compat=1.17}
\newcommand{\mcX}{\mathcal{X}}
\newcommand{\mcO}{\mathcal{O}}
\newcommand{\mcU}{\mathcal{U}}
\newcommand{\mcQ}{\mathcal{Q}}
\newcommand{\mcH}{\mathcal{H}}
\newcommand{\mcE}{\mathcal{E}}
\newcommand{\mcF}{\mathcal{F}}
\newcommand{\ResnI}{\mathrm{Res}_n^{(1)}(t)}
\newcommand{\ResnII}{\mathrm{Res}_n^{(2)}(t)}
\newcommand{\ResI}{\mathrm{Res}^{(1)}(t)}
\newcommand{\ResII}{\mathrm{Res}^{(2)}(t)}
\newcommand{\Z}{\mathbb{Z}}
\newcommand{\R}{\mathbb{R}}
\newcommand{\N}{\mathbb{N}}
\newtheorem{defn}{Definition}[section]
\newtheorem{prop}{Proposition}[section]
\newtheorem{lem}{Lemma}[section]
\newtheorem{theorem}{Theorem}[section]
\newtheorem{assum}{Assumption}
\newtheorem{rem}{Remark}
\begin{document}
	
\title{Long-Time approximations of small-amplitude, long-wavelength FPUT solutions}

\author{Trevor Norton$^1$ and C. Eugene Wayne$^1$}
\address{$^1$Department of Mathematics and Statistics, Boston University, Boston, MA, United States}
\email{nortontm@bu.edu, cew@bu.edu}

\date{}
\thanks{The authors' research was supported in part by the US NSF through the grant DMS-18133.}
\subjclass[2020]{Primary 37K60; Secondary 37K40, 35Q53}
\keywords{Fermi-Pasta-Ulam-Tsingou lattice, kink solutions, modified Kortweg-de Vries equation, small-amplitude approximations, long-time stability}

\begin{abstract}
It is well known that the Korteweg-de Vries (KdV) equation and its generalizations serve as modulation equations for traveling wave solutions to generic Fermi-Pasta-Ulam-Tsingou (FPUT) lattices. Explicit approximation estimates and other such results have been proved in this case. However, situations in which the defocusing modified KdV (mKdV) equation is expected to be the modulation equation have been much less studied. As seen in numerical experiments, the kink solution of the mKdV seems essential in understanding the \(\beta\)-FPUT recurrence. In this paper, we derive explicit approximation results for solutions of the FPUT using the mKdV as a modulation equation. In contrast to previous work, our estimates allow for solutions to be non-localized as to allow approximate kink solutions. These results allow us to conclude meta-stability results of kink-like solutions of the FPUT. 
\end{abstract}

\maketitle

\section{Introduction}

Much work has been done in studying the traveling wave solutions of the Fermi-Pasta-Ulam-Tsingou (FPUT) lattice and analyzing their stability. Recall that the FPUT lattice is an infinite set of ODEs with nearest-neighbor interaction given by the equations
\begin{equation}\label{fput-lattice-odes}
	\ddot x_n = V'(x_{n+1} - x_n) - V'(x_n - x_{n-1}), \quad n \in \Z
\end{equation}
where \(V\) is the interaction potential between neighboring particles and \(``\dot{\hspace{1em}}"\) denotes the derivative with respect to the time \(t\in \mathbb{R}\). \Cref{fput-lattice-odes} can be rewritten in the strain variables \(u_n := x_{n+1} - x_n\) so that
\begin{equation}\label{fput-lattice-equations-strain-variables}
	\ddot{u}_n = V'(u_{n+1}) - 2 V'(u_n) + V'(u_{n-1}), \quad n \in \Z.
\end{equation}
It is well-known that the continuum limit for \cref{fput-lattice-equations-strain-variables} is given by generalized Korteweg-de Vries (KdV) equations, which implies the existence of small-amplitude, long-wavelength traveling wave solutions of the FPUT. For instance, if we take a sufficiently regular \(V(x)\) with \(V''(0) =: c^2 > 0\) and \(V'''(0) \neq 0 \), then small-amplitude, long wavelength solutions of \cref{fput-lattice-equations-strain-variables} can be approximated as
\begin{equation*}
	u_n(t) \approx \epsilon^2 f(\epsilon(n + ct), \epsilon^3 t) + \epsilon^2 g(\epsilon(n-ct), \epsilon^3t)
\end{equation*}
where \(f\) and \(g\) solve the uncoupled KdV equations
\begin{align*}
	2 \partial_2 f &= \frac{ c} {12} \partial_1^3 f+ \frac{V'''(0)}{c } f \partial_1 f, \\
	-2 \partial_2 g &= \frac{ c} {12} \partial_1^3 g+ \frac{V'''(0)}{c } g \partial_1 g.
\end{align*}
This approximation holds up to order \(\mcO(\epsilon^4)\) for time scales of order \(\mcO(\epsilon^{-3})\) and can be shown by using energy methods to control the error of the approximation \cite{schneider2000counter}. Similar results have been shown using methods from dispersive PDEs \cite{hong2021korteweg}. Additionally, solitary wave solutions of the FPUT whose profiles are close to the profile of the soliton solution of the KdV exist and are asymptotically stable in an exponentially weighted space \cites{friesecke1999solitary,friesecke2002solitary,friesecke2003solitary,friesecke2004solitary}. 

For potentials of the form
\begin{equation}\label{generic-potential}
	V(x) = \frac 1 2 x^2 + \frac 1 {p+1} x^{p+1},
\end{equation}
the continuum limit of \cref{fput-lattice-equations-strain-variables} becomes the generalized KdV equation
\begin{equation}
	\partial_2 g + \frac 1 {12} \partial_1^3 g + \partial_1( g^p) = 0
\end{equation}
and there are approximate solutions of the form
\begin{equation}
	u_n(t) \approx \epsilon^{\alpha} g(\epsilon(n-t), \epsilon^3 t)
\end{equation}
with \(\alpha = 2/(p-1)\). This approximation holds for time scales of order \(\mcO( \epsilon^{-3} |\log(\epsilon)|)\) for a global solution \(g\). This extends the approximation result beyond the normal time scale of the ansatz, i.e., the ansatz was taken on the \(\epsilon^{-3}\) time scale but holds for longer times as \(\epsilon \to 0\). This gives that the orbital stability of solutions for the generalized KdV implies the metastability of solitary wave solutions for the FPUT \cite{khan2017long}. A thorough overview of the research  into solitary waves of the FPUT can be found in \cite{vainchtein2022solitary}.

For this paper, we will assume that the potential is of the form
\begin{equation}\label{potential}
	V(x) = \frac 1 2 x^2  - \frac 1 {24} x^4.
\end{equation}
The minus sign for the quartic term (in contrast to the positive coefficient in \cref{generic-potential}) gives a formal continuum limit of the defocusing modified KdV (mKdV), which can be written as
\begin{equation*}\label{mkdv}
	\partial_t v - 6v^2 \partial_x v  + \partial_x^3 v = 0,
\end{equation*}
after rescaling. This limit can be found by doing a formal calculation with the small-amplitude, long-wavelength ansatz and equating the orders of \(\epsilon\). A notable difference between the defocusing mKdV and the KdV is that the former admits kink solutions. Numerical calculations of the FPUT with potential given in \cref{potential} were carried out, and the kink solutions of the mKdV seem essential to understand the recurrence exhibited by the FPUT \cite{pace2019beta}. As opposed to the generalized KdV equations, little work has been done in obtaining rigorous results on how the defocusing mKdV can be more generally used as a modulation equation for the FPUT with potential given by \cref{potential}. Traveling waves of \cref{fput-lattice-equations-strain-variables} where the solution has fixed limits at spatial infinity have been shown to exist using a center manifold reduction \cite{iooss2000travelling}*{Thm.~5, (d)}, and this result can be extended to show that the profile of a traveling wave solution on the lattice is well-approximated by the profile of the kink solution of the mKdV \cite{norton2023kinklike}. This kink-like solution on the lattice is in the family of heteroclinic front solutions. Such solutions have previously been identified for lattice systems with nearest-neighbor interactions in \cite{truskinovsky2005kinetics}. The existence of fronts connecting oscillatory states have been shown in \cite{schwetlick2009existence} for certain double-well potentials. Variational techniques were used to show the existence of heteroclinic traveling waves connecting asymptotic states for FPUT-type systems in \cites{herrmann2011action, herrmann2010heteroclinic}. Kink-type solutions in the FPUT were also shown in \cite{gorbushin2020supersonic} for non-smooth potentials.

This paper explores how the defocusing mKdV can be generally used as a modulation equation for traveling wave solutions of the FPUT with potential given by \cref{potential} and is based off the work done in \cite{norton2023kinklike}. One difference that this work has with other previous results is that we allow the solutions of the FPUT to have non-zero limits at infinity in anticipation for the kink-like solution. Allowing the non-zero limits creates challenges in getting the approximation results since we can no longer assume that our functions lie in nice Sobolev spaces, and so we need to introduce an appropriate Banach space for the functions.

We have two main results. The first major result shows that small-amplitude, long-wavelength solutions of \cref{fput-lattice-equations-strain-variables} can be approximated by counter-propagating solutions of a decoupled system of PDEs. In particular, the mKdV and a generalized KdV equation serve as modulation equations for such solutions on long, finite time scales. This result is notable in that behavior observed in the decoupled PDEs can thus also be observed in the FPUT. The second major result shows that the approximation of small-amplitude, long-wavelength solutions can be extended to a longer time scale when reduced to a single propagating solution of the mKdV. This implies the meta-stability of the kink-like solutions of the FPUT. 

The paper will be structured as follows. In \cref{sec:ansatz} we introduce the small-amplitude, long-wavelength ansatz and define new Banach spaces appropriate for the kink solution. The ansatz is similar to the one introduced by Schneider and Wayne in \cite{schneider2000counter}: two counter-propagating waves which satisfy de-coupled PDEs and an ``interference" term. It is shown that if the functions in the ansatz are bounded by appropriate norms, then the interference term remains uniformly bounded in time. In \cref{sec:lattice-eqns}, we rewrite \cref{fput-lattice-equations-strain-variables} into a first order system. Then using our ansatz, we can find the system of equations for the error in our approximation. We aim to show that this error remains small for our time scale. In \cref{sec:estimates} we make some necessary estimates on the residual and nonlinear terms for the error equations. In \cref{sec:long-time-approx}, we show that the ansatz holds on time scales of order \(\mcO(\epsilon^{-3})\) in a result that is analogous to \cite{schneider2000counter}. In \cref{sec:meta}, we drop the counter-propagating wave ansatz and look at a single traveling wave solution. From here, we can show that the ansatz holds for time scales \(\mcO(\epsilon^{-3}|\log(\epsilon)|)\), which allows us to comment on the meta-stability of the kink-like solution in the FPUT. The proofs of technical lemmas are provided in \cref{lemma-appendix}.

\section{Counter-Propagating Waves Ansatz}\label{sec:ansatz}

We make the assumption that solutions of \cref{fput-lattice-equations-strain-variables} can be expressed as a sum of two counter-propagating small-amplitude waves, i.e., 
\begin{equation}\label{ansatz}
	u_n(t) \approx \epsilon f(\epsilon(n+t), \epsilon^3t) + \epsilon g(\epsilon(n-ct), \epsilon^3 t) + \epsilon^3\phi(\epsilon n, \epsilon t)
\end{equation}
where we allow \(f\) to have fixed non-zero limits, \(f_\pm\), at positive and negative infinity and \(\phi\) captures the interaction effects between \(f\) and \(g\). The wave speed of \(g\) is given by
\begin{equation}\label{ansatz-wave-speed}
	c = c(\epsilon, f_+) = 1 - \frac{\epsilon^2 f_+^2}{4}.
\end{equation}
The choice of wave speed for \(f\) follows from the fact that we expect such solutions to have wave speeds near  \(V''(0) = 1\), as shown in \cites{iooss2000travelling,norton2023kinklike}. The wave speed of \(g\) results in part from the background of \(f_+\). This shifts the point about which we expand and is accounted for by a re-normalization in the speed. Plugging in the ansatz in \cref{ansatz} back into \cref{fput-lattice-equations-strain-variables} and grouping terms of the same order \(\epsilon\) together gives
\begin{equation*}
	\begin{aligned}
		&\epsilon^3 \Big(\partial_1^2 f(\cdot, \epsilon^3 t) + \partial_1^2 g(\cdot, \epsilon^3 t)\Big) \\
		&+ \epsilon^5 \Big( 2 \partial_1\partial_2 f(\cdot, \epsilon^3t) - 2 \partial_1\partial_2 g(\cdot, \epsilon^3 t) - \frac{f_+^2} 2 \partial_1^2 g + \partial_2^2 \phi(\epsilon x, \epsilon t)\Big) \\
		&+ \mathcal O(\epsilon^6)\\
		&\qquad = \quad \epsilon^3 \Big(\partial_1^2 f(\cdot, \epsilon^3 t) + \partial_1^2 g(\cdot, \epsilon^3 t)\Big) \\
		&\qquad\qquad + \epsilon^5\Big(\partial_1^2  \phi(\epsilon x, \epsilon t) \\
		&\qquad\qquad\qquad - \frac 1 6 \partial_1^2 \big[f^3(\cdot, \epsilon^3 t ) + 3 f^2(\cdot,\epsilon^3)g(\cdot, \epsilon^3t) + 3 f(\cdot, \epsilon t) g^2(\cdot,\epsilon^3 t) + g^3(\cdot, \epsilon^3 t)\big] \\
		&\qquad\qquad\qquad+ \frac 1 {12} \partial_1^4 f(\cdot, \epsilon^3 t) + \frac 1 {12} \partial_1^4 g(\cdot, \epsilon^3 t)\Big) \\
		&\qquad\qquad+ \mathcal O(\epsilon^6).
	\end{aligned}
\end{equation*}
Clearly the equation will hold up to order \(\epsilon^3\). For the order \(\epsilon^5\) terms, the equation will again hold if \(f\), \(g\), and \(\phi\) satisfy
\begin{equation}\label{f-mKdV}
	2 \partial_2 f = - \frac 1 6 \partial_1(f^3) + \frac 1 {12} \partial_1^3 f,
\end{equation}
and 
\begin{equation}\label{g-gKdV}
	-2 \partial_2 g = - \frac 1 6 \partial_1 (g^3 + 3f_+ g^2) + \frac 1 {12}\partial_1^3 g,
\end{equation}
and
\begin{equation}\label{phi-pde}
	\begin{aligned}
		\partial_2^2 \phi(\xi, \tau) &= \partial_1^2\phi(\xi, \tau)- \frac 1 6 \partial_1^2 \big[ 3(f^2(\xi+\tau,\epsilon^2\tau)-f_+^2)g(\xi-c\tau,\epsilon^2\tau) + 3 (f(\xi+\tau,\epsilon^2\tau)-f_+)g^2(\xi-c\tau,\epsilon^2\tau) \big ] \\
		\phi(\xi,0) &= \partial_1 \phi(\xi, 0) = 0.
	\end{aligned}
\end{equation}
Note that \cref{f-mKdV} is the defocusing mKdV equation and \cref{g-gKdV} is a type of generalized KdV equation. This formal calculation shows that the mKdV can serve as a modulation equation. That is, for \(\epsilon\) sufficiently small, one would expect the ansatz in \cref{ansatz} to hold for time on the order of \(\epsilon^{-3}\). We make precise this notion, but we must first make decisions for the function spaces in which the functions \(f\), \(g\), and \(\phi\) must live.

A natural choice of function space for \(g\) is a Sobolev space like \(H^k(\R)\). However, for \(f\), we want to allow the possibility of the function approaching a non-zero limit at positive and negative infinity while also having sufficient regularity. 
\begin{defn}
	For \(k\in\N\), let \(\mcX^k(\R)\) be the Banach space 
	\begin{equation*}
		\mcX^k(\R) := \{f \in L^\infty(\R) \mid f'\in H^{k-1}(\R)\}
	\end{equation*}
	with norm
	\begin{equation*}
		\| f \|_{\mcX^k(\R)} := \| f \|_{L^\infty(\R)} + \| f' \|_{H^{k-1}(\R)}.
	\end{equation*}
\end{defn}
Then \(\mcX^k\) is the set of \(L^\infty\) functions which are \(k\) times weakly differentiable and whose derivatives are in \(L^2\). That this is a Banach space follows from the Banach space isomorphism
\begin{equation*}
	\mcX^k(\R) \cong L^\infty(\R) \cap \dot H^1(\R) \cap \dot H^k(\R),
\end{equation*}
where \(\dot H^k(\R)\) denotes the homogeneous Sobolev spaces. For convenience, we let \(\mcX^0(\R)\) denote \(L^\infty(\R)\).

Note that \cref{f-mKdV} has kink solutions of the form
\begin{equation}\label{kink-solutions}
	f(X,T) = \sqrt{12 v} \tanh\left( \sqrt{12v} (X-vT)\right).
\end{equation}
In particular, setting \(v = 1/24\) we get the approximate solution on the lattice given by
\begin{equation*}
	\frac \epsilon {\sqrt 2} \tanh\left(\frac \epsilon {\sqrt 2} \left(n - \left(1-\frac{\epsilon^2} {24}\right) t\right)\right).
\end{equation*} 
Comparing the above result to the kink-like solution found in \cite{norton2023kinklike}, the approximate solution seems to agree with the kink-like solution on the lattice for long periods of time (i.e.\ it should hold formally for \(t\) of order \(\mathcal O(\epsilon^{-4})\)). The space \(\mcX^k\) allows solutions of this form and thus lets us study kink-like solutions on the lattice.

We also have the following inequalities for products of functions in \(\mcX^k\) and \(H^k\) that will be useful.

\begin{restatable}{lem}{prodruleone}
	\label{prod-rule-1-lem}
	For non-negative integers \(k\), there is a \(C>0\) such that
	\begin{equation}\label{prod_rule}
		\| fg \|_{H^k} \leq C \| f \|_{\mathcal X^k} \| g \|_{H^k}
	\end{equation}
	for any \(f\in \mcX^k(\mathbb R)\) and \(g \in H^k(\mathbb R)\).
\end{restatable}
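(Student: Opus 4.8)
The plan is to prove \cref{prod_rule} by induction on $k$, using the Leibniz rule together with the Banach space isomorphism $\mcX^k(\R) \cong L^\infty(\R) \cap \dot H^1(\R) \cap \dot H^k(\R)$ to control the ``missing'' $L^2$ bound on $f$ itself. The base case $k = 0$ is immediate: $\|fg\|_{L^2} \leq \|f\|_{L^\infty}\|g\|_{L^2} = \|f\|_{\mcX^0}\|g\|_{H^0}$. For the inductive step, I would write $\|fg\|_{H^k}^2 \sim \|fg\|_{L^2}^2 + \|(fg)'\|_{H^{k-1}}^2$, bound the first term as in the base case, and expand $(fg)' = f'g + fg'$. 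Then $\|fg'\|_{H^{k-1}} \leq C\|f\|_{\mcX^{k-1}}\|g'\|_{H^{k-1}} \leq C\|f\|_{\mcX^k}\|g\|_{H^k}$ by the inductive hypothesis, since $f \in \mcX^k \subset \mcX^{k-1}$ and $g' \in H^{k-1}$ whenever $g \in H^k$.

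The remaining piece is the term $f'g$, and here the natural estimate $\|f'g\|_{H^{k-1}}$ does not follow directly from the inductive hypothesis because $f'$ lies in $H^{k-1}$, not in $\mcX^{k-1}$, and $g$ lies in $H^k$, not $H^{k-1}$. So I would instead prove a companion estimate $\|hg\|_{H^{k-1}} \leq C\|h\|_{H^{k-1}}\|g\|_{H^k}$ for $h \in H^{k-1}$ and $g \in H^k$ — i.e., the standard fact that $H^{k-1}\cdot H^k \hookrightarrow H^{k-1}$ when one factor has one extra derivative — and apply it with $h = f'$. This companion inequality is itself provable by the same Leibniz induction, now using the Sobolev embedding $H^k(\R) \hookrightarrow L^\infty(\R)$ (valid since $k \geq 1$) to handle the lowest-order term: $\|hg\|_{L^2} \leq \|h\|_{L^2}\|g\|_{L^\infty} \leq C\|h\|_{L^2}\|g\|_{H^1}$, and then expanding $(hg)' = h'g + hg'$ with $h' \in H^{k-2}$, $g \in H^k$ handled by induction and $hg' $ with $h \in H^{k-1}$, $g' \in H^{k-1}$ handled by the ordinary algebra property of $H^{k-1}$ for $k \geq 2$ (or Sobolev embedding for $k-1 \geq 1$). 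Combining the bounds on $fg$, $fg'$, and $f'g$ then closes the induction for \cref{prod_rule}.

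The main obstacle, and the only genuinely delicate point, is bookkeeping the low-regularity endpoints: one must be careful that at each stage the factor being differentiated down to $H^0 = L^2$ is paired with a factor that still sits in $L^\infty$ (via Sobolev embedding) or in $\dot H^1 \cap L^\infty$ (via the $\mcX^k$ structure), so that no $L^2$ norm of $f$ alone — which is not controlled by $\|f\|_{\mcX^k}$ — is ever needed. Concretely, the structure of $\mcX^k$ is exactly what makes the term $f'g$ work: we never differentiate $f$ zero times against a pure $L^2$ function, because in $f'g$ the factor $f$ already carries a derivative. I would organize the argument so that this is transparent, stating the companion $H^{k-1}\cdot H^k$ estimate as an auxiliary claim first, then deducing \cref{prod_rule}. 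All constants depend only on $k$, as claimed.
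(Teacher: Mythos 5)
Your proposal is correct and follows essentially the same route as the paper: induction on \(k\) via the Leibniz rule, with the \(fg'\) term handled by the inductive hypothesis and the \(f'g\) term handled by a pure Sobolev product estimate exploiting that \(f'\in H^{k-1}\). The only difference is presentational — you isolate the \(f'g\) step as a companion estimate \(\|hg\|_{H^{k-1}}\leq C\|h\|_{H^{k-1}}\|g\|_{H^k}\) with the Sobolev embedding \(H^1(\R)\hookrightarrow L^\infty(\R)\) at the lowest level, which in fact treats the \(k=1\) endpoint more carefully than the paper's terse bound \(\|f'g\|_{H^k}\leq\|f'\|_{H^k}\|g\|_{H^k}\).
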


\begin{restatable}{lem}{prodruletwo}
	\label{prod-rule-2-lem}
	For non-negative integers \(k\), there is a \(C>0\) such that
	\begin{equation*}
		\| fg \|_{\mcX^k} \leq C \| f \|_{\mcX^k} \| h \|_{\mcX^k}
	\end{equation*}
	for any \(f,g\in \mcX^k(\mathbb R)\).
\end{restatable}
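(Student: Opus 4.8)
The plan is to reduce the claim to the mixed product estimate of \Cref{prod-rule-1-lem} together with the Leibniz rule. First dispose of the base case \(k=0\): since \(\mcX^0(\R)=L^\infty(\R)\), the bound \(\|fg\|_{L^\infty}\le\|f\|_{L^\infty}\|g\|_{L^\infty}\) is immediate, so the lemma holds with \(C=1\). (Here and below I read the right-hand side of the statement as \(\|f\|_{\mcX^k}\|g\|_{\mcX^k}\), the ``\(h\)'' being a typo.)

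For \(k\ge 1\), recall that by definition \(\|fg\|_{\mcX^k}=\|fg\|_{L^\infty}+\|(fg)'\|_{H^{k-1}}\), and that \((fg)'=f'g+fg'\) in the weak sense. The \(L^\infty\) term is handled exactly as in the base case. For the \(H^{k-1}\) term, note first the elementary embedding: for every \(h\in\mcX^k(\R)\) one has \(\|h\|_{\mcX^{k-1}}=\|h\|_{L^\infty}+\|h'\|_{H^{k-2}}\le\|h\|_{L^\infty}+\|h'\|_{H^{k-1}}=\|h\|_{\mcX^k}\), so in particular \(f,g\in\mcX^{k-1}(\R)\). Now \(f'\in H^{k-1}(\R)\), and applying \Cref{prod-rule-1-lem} with \(k\) replaced by \(k-1\), using \(g\) as the \(\mcX\)-factor and \(f'\) as the \(H\)-factor, gives
\begin{equation*}
	\|f'g\|_{H^{k-1}}\le C\|g\|_{\mcX^{k-1}}\|f'\|_{H^{k-1}}\le C\|g\|_{\mcX^k}\|f\|_{\mcX^k}.
\end{equation*}
Symmetrically, with \(f\) as the \(\mcX\)-factor and \(g'\) as the \(H\)-factor, \(\|fg'\|_{H^{k-1}}\le C\|f\|_{\mcX^{k-1}}\|g'\|_{H^{k-1}}\le C\|f\|_{\mcX^k}\|g\|_{\mcX^k}\). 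Summing the three contributions and absorbing constants yields the claimed inequality.

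The argument is essentially bookkeeping; there is no serious obstacle beyond (i) verifying the embedding \(\mcX^k\hookrightarrow\mcX^{k-1}\), which is clear from the isomorphism \(\mcX^k\cong L^\infty\cap\dot H^1\cap\dot H^k\) noted above, and (ii) correctly matching hypotheses in \Cref{prod-rule-1-lem} — the key observation being that each summand of \((fg)'\) has exactly one factor lying in \(\mcX^{k-1}\) and the other (a derivative) lying in \(H^{k-1}\), which is precisely the configuration that lemma controls. All the genuine analytic content is already contained in \Cref{prod-rule-1-lem}.
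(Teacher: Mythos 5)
Your proof is correct and follows essentially the same route as the paper: split \(\|fg\|_{\mcX^k}\) into the \(L^\infty\) part plus \(\|(fg)'\|_{H^{k-1}}\), apply the Leibniz rule, and control \(f'g\) and \(fg'\) via \Cref{prod-rule-1-lem} at level \(k-1\). Your explicit treatment of the \(k=0\) case and of the embedding \(\mcX^k\hookrightarrow\mcX^{k-1}\) (and your reading of ``\(h\)'' as a typo for \(g\)) only make explicit details the paper leaves implicit.
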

See \cref{lemma-appendix} for proofs.

However, for our main result, we require that \(\phi\), the term which captures the interaction effects, remains uniformly bounded for all time. Intuitively, if \(f- f_+\) and \(g\) are localized, the inhomogeneous term in \cref{phi-pde} will quickly go to zero, and the equation governing \(\phi\) will approach the homogeneous wave equation, for which Sobolev norms remain uniformly bounded. Since the two functions are localized and counter-propagating, their product will quickly decay in time as the two wave profiles move in opposite directions as seen in \cref{fig-counter-propogating-interaction}. Thus we require that \(f\) and \(g\) quickly decay to their respective limits at infinity. This is enforced by assuming the functions belong to appropriate weighted Banach spaces.

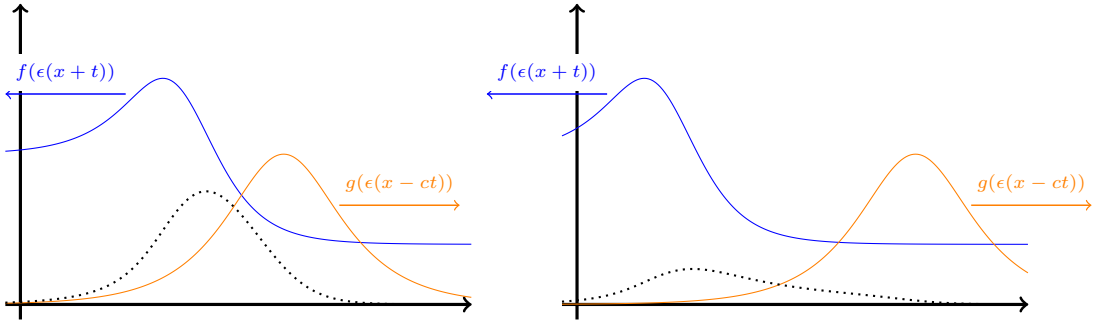
\begin{figure}[hb]
	\centering
	\begin{tikzpicture}[
		scale=2,
		declare function={sech(\x) = 2/(exp(\x) + exp(-\x));
			fun1(\x,\c) = -0.3 * tanh((\x-\c)*3) +0.7  + 0.6*sech((\x +0.25 - \c)*3.75);
			fun2(\x,\c) = sech((\x-\c)*3);
		},
		]
		\draw[->, very thick] (-0.1,0) -- (3,0) {};
		\draw[->, very thick] (0,-0.1) -- (0,2) {};
		
*		\draw[domain=-.1:3, samples = 200, color=blue] plot (\x, {fun1(\x, 1.25)} );
		\draw[domain=-.1:3, samples = 200, color=orange] plot (\x, {fun2(\x, 1.75)} );
		\draw[domain=-.1:3, samples = 200, thick, dotted] plot (\x, {2.5* (fun1(\x,1.25) - 0.4) * fun2(\x,1.75)});
		
		\draw[->,line width=0.20mm,color=blue] (.7, 1.4) -- (-0.1,1.4) {} node[midway,above=1.2, fill=white] {\scriptsize$f(\epsilon(x+t))$};
		\draw[->,line width=0.20mm,color=orange] (2.12, .66) -- (2.92,.66) {} node[midway, above=1.2] {\scriptsize $g(\epsilon(x-ct))$};
		
		\begin{scope}[xshift=3.7cm]
			\draw[->, very thick] (-0.1,0) -- (3,0) {};
			\draw[->, very thick] (0,-0.1) -- (0,2) {};
			
			\draw[domain=-.1:3, samples = 200, color=blue] plot (\x, {fun1(\x, .75)} );
			\draw[domain=-.1:3, samples = 200, color=orange] plot (\x, {fun2(\x, 2.25)} );
			\draw[domain=-.1:3, samples = 200, thick, dotted] plot (\x, {15* (fun1(\x,.75) - 0.4) * fun2(\x,2.25)});
			
			\draw[->,line width=0.20mm,color=blue] (.2, 1.4) -- (-0.6,1.4) {} node[midway,above=1.2, fill=white] {\scriptsize$f(\epsilon(x+t))$};
			\draw[->,line width=0.20mm,color=orange] (2.62, .66) -- (3.42,.66) {} node[midway, above = 1.2] {\scriptsize $g(\epsilon(x-ct))$};
		\end{scope}
	\end{tikzpicture}
	\caption{\label{fig-counter-propogating-interaction} The function \(f(\epsilon(x+t))\) (shown in blue) moves to the left while \(g(\epsilon(x-ct))\) (shown in orange) moves to the right. The product of \(f - f_+\) and \(g\) (shown by the dotted line) quickly decays in time since the functions are localized. Also note that the limit \(f_{+}\) changes the background of \(g\) which results in the change in wave speed of \(g\) as given by \cref{ansatz-wave-speed}.}
\end{figure}

A suitable choice of space for \(g\) is the weighted Sobolev spaces \(H^k_n(\R)\). Here, \(H^k_n\) for \(k,n\in\mathbb N\cup \{0\}\)
\begin{equation*}
	H^k_n(\R) := \{ g\in H^k(\R) \mid   g\langle \cdot \rangle^n \in H^k \}
\end{equation*}
where \(\langle x \rangle = \sqrt{1+x^2}\). The norm on this space is
\begin{equation*}
	\| g \|_{H^k_n(\R)} := \|  g \langle \cdot \rangle^n\|_{H^k(\R)}.
\end{equation*}
This space has the useful property that if \(g \in H^k_n\), then its Fourier transform, \(\hat g \), is in \(H^n_k\) and 
\begin{equation*}
	c \| \hat g \|_{H^n_k} \leq \| g \|_{H^k_n} \leq C \| \hat g \|_{H^n_k}
\end{equation*}
for \(c,C>0\) independent of \(g\).

We want an analogous space for \(f\), but allowing for non-zero limits at infinity. Let \(\langle\cdot \rangle_+ :\R \to \R\) be a smooth function such that
\begin{equation*}
	\langle x \rangle_+ = \begin{cases} \langle x \rangle, & x>1 \\ 1, & x<0\end{cases}
\end{equation*}
and \(\langle \cdot \rangle_+\) continued smoothly between \(0\) and \(1\) such that it is always greater than or equal to \(1\). Thus \(\langle \cdot \rangle_+\) is a function that only acts like \(\langle \cdot \rangle\) for numbers greater than \(1\). The function \(\langle \cdot \rangle_-\) is similarly defined but for numbers less than \(-1\).

\begin{defn}
	Define \(\mcX^k_{n^+} (\R)\) to be the Banach space of functions where 
	\begin{equation*}
		\mcX^k_{n^+} (\R) := \{ f \in \mcX^k(\R) \mid \lim_{x\to\infty} f(x) = f_+\text{ and } (f-f_+)\langle\cdot\rangle_+^n \in \mcX^k(\R)\}
	\end{equation*}
	with norm given by
	\begin{equation*}
		\| f \|_{\mcX^k_{n^+}(\R)} := |f_+| + \|(f-f_+) \langle \cdot \rangle_+^n \|_{\mathcal X^k(\R)}
	\end{equation*}
	Similarly, 
	\begin{equation*}
		\mcX^k_{n^-} (\R) := \{ f \in \mcX^k(\R) \mid \lim_{x\to-\infty} f(x) = f_-\text{ and } (f-f_-)\langle\cdot\rangle_-^n \in \mcX^k(\R)\}
	\end{equation*}
	and 
	\begin{equation*}
		\| f \|_{\mcX^k_{n^-}(\R)} := |f_-| + \|(f-f_-) \langle \cdot \rangle_-^n \|_{\mathcal X^k(\R)}
	\end{equation*}
	Define \(\mcX^k_n(\R)\) to be the intersection of these Banach spaces. That is,
	\begin{equation*}
		\mcX^k_n(\R) := \mcX^k_{n^+} (\R) \cap \mcX^k_{n^-} (\R), \quad \| f \|_{\mcX^k_{n} (\R)} := \|f\|_{\mcX^k_{n^+} (\R)} + \|f\|_{\mcX^k_{n^-} (\R)}.
	\end{equation*}
\end{defn}
That \(\mcX^k_{n^\pm}\) are Banach spaces follows from the fact that there exists a linear isomorphism between the Banach space \(\R\times \mcX^k\) and these spaces, which is given by
\begin{equation*}
	(\alpha, f) \mapsto \alpha + f \langle \cdot \rangle^{-n}_{\pm}.
\end{equation*}
One can show that the kink solutions as specified in \cref{kink-solutions} lie in \(\mcX^k_n\) for all \(k,n\geq 0\); the derivatives are smooth and decay exponentially to zero, and the kink solutions approach the limits \(\pm\sqrt{12v}\) exponentially fast. These spaces also contain bounded rational functions. For instance, the function \[1 + \frac 1 {x^2 +1}\] is in \(\mcX^k_2(\R)\) since it approaches its limit at infinity (which in this case is \(1\)) at a rate of \(\mathcal O(1/x^2)\), and its derivatives are in \(H^0_2(\R)\).

The definitions above are used to prove that \(\phi\) remains bounded for all time. The idea behind the proof is similar to that of \cite{schneider2000counter}*{Lemma~3.1}. The following lemma will be useful in showing the decay in products of \(f-f_+\) and \(g\).
\begin{restatable}{lem}{cknormbound}
	For each \(k\geq 0\) and \(c > 0\), there exists \(C> 0\) depending only on \(k\) such that 
	\begin{equation}\label{Ck-bound}
		\left \| \frac 1 {\langle \cdot +\tau\rangle_+^2 \langle \cdot - c\tau \rangle^2} \right \|_{C^k} \leq C\, \sup_{x\in\mathbb R} \frac 1 {\langle x +\tau\rangle_+^2 \langle x -c \tau \rangle^2}.
	\end{equation}
	Furthermore,
	\begin{equation}\label{sup-integrable}
		\int_0^\infty \sup_{x\in\mathbb R} \frac 1 {\langle x +\tau\rangle_+^2 \langle x -c \tau \rangle^2}\, d\tau <\infty.
	\end{equation}
\end{restatable}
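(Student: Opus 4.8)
The plan is to treat the two assertions separately. Both rest on the elementary fact that the functions $w(y):=\langle y\rangle^{-2}$ and $w_+(y):=\langle y\rangle_+^{-2}$ \emph{control their own derivatives}: for every $j\ge 0$ there is a constant $C_j$, depending only on $j$ (and on the once-and-for-all choice of $\langle\cdot\rangle_+$), such that $|w^{(j)}(y)|\le C_j\,w(y)$ and $|w_+^{(j)}(y)|\le C_j\,w_+(y)$ for all $y\in\R$. For $w$ this is an induction: one shows $w^{(j)}(y)=(1+y^2)^{-1-j}P_j(y)$ with $\deg P_j\le j$, and since $|y|^m\le\langle y\rangle^m$ this gives $|w^{(j)}(y)|\le C_j(1+y^2)^{-1}=C_j\,w(y)$. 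For $w_+$ I split $\R$ into $(-\infty,0]$, where $\langle\cdot\rangle_+\equiv 1$ so every derivative vanishes; $[1,\infty)$, where $\langle\cdot\rangle_+=\langle\cdot\rangle$ and the previous estimate applies verbatim; and the compact interval $[0,1]$, on which $\langle\cdot\rangle_+$ is a fixed smooth function with $1\le\langle\cdot\rangle_+\le\sqrt2$, so $w_+$ and all of its derivatives are bounded by absolute constants there while $w_+\ge 1/2$.

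For \cref{Ck-bound}, write $g_\tau(x):=\langle x+\tau\rangle_+^{-2}\langle x-c\tau\rangle^{-2}=w_+(x+\tau)\,w(x-c\tau)$. By the Leibniz rule (the shifts by $+\tau$ and $-c\tau$ contribute inner derivative $1$),
\[
\partial_x^k g_\tau(x)=\sum_{j=0}^k\binom{k}{j}\,w_+^{(j)}(x+\tau)\,w^{(k-j)}(x-c\tau),
\]
so by the self-bounds each summand is dominated by $C_jC_{k-j}\,w_+(x+\tau)w(x-c\tau)=C_jC_{k-j}\,g_\tau(x)$. Summing over $j$, and over all orders $\le k$, then taking $\sup_x$, gives $\|g_\tau\|_{C^k}\le C_k\sup_{x}g_\tau(x)$ with $C_k$ depending only on $k$.

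For \cref{sup-integrable}, set $T_0:=2/(1+c)$ and split $\int_0^\infty=\int_0^{T_0}+\int_{T_0}^\infty$. On $[0,T_0]$ each factor of $g_\tau$ is at most $1$, so $\sup_x g_\tau(x)\le 1$ and this piece contributes at most $T_0$. For $\tau\ge T_0$ I claim $\sup_x g_\tau(x)\le \tfrac{4}{(1+c)^2\tau^2}$: given $x$, either $x+\tau\ge(1+c)\tau/2$, in which case $x+\tau\ge(1+c)T_0/2=1$ and hence $\langle x+\tau\rangle_+^{-2}=(1+(x+\tau)^2)^{-1}\le\tfrac{4}{(1+c)^2\tau^2}$; or $x+\tau<(1+c)\tau/2$, which forces $x-c\tau<-(1+c)\tau/2$ (it is here that $c,\tau>0$ enter, keeping the two ``centers'' $-\tau$ and $c\tau$ a distance $(1+c)\tau$ apart), whence $\langle x-c\tau\rangle^{-2}\le\tfrac{4}{(1+c)^2\tau^2}$; in either case the remaining factor is $\le 1$. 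Since $\int_{T_0}^\infty\tau^{-2}\,d\tau<\infty$, the integral in \cref{sup-integrable} is finite.

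The whole argument is essentially bookkeeping; the only mild subtlety is the behavior of $\langle\cdot\rangle_+$ on its transition interval $[0,1]$, which compactness dispatches, together with stating the dichotomy in the last step so that the separation of the two bumps is visible. I do not expect a genuine obstacle here.
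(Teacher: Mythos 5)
Your argument is correct, and for the first claim it rests on the same underlying fact as the paper's proof — that differentiating the weights does not worsen their decay — but you package it more cleanly: you factor the function as $w_+(x+\tau)\,w(x-c\tau)$ with $w(y)=\langle y\rangle^{-2}$, $w_+(y)=\langle y\rangle_+^{-2}$, prove the self-bounds $|w^{(j)}|\le C_j\,w$ and $|w_+^{(j)}|\le C_j\,w_+$ once, and then get \cref{Ck-bound} from the Leibniz rule in a few lines. The paper instead runs an induction on $k$ tracking a general structural form of the derivatives (products of powers of $\langle x+\tau\rangle_+$, $\langle x-c\tau\rangle$ and bounded functions $F$) and bounds each such term; the content is the same, but your per-factor formulation is shorter and makes the $c$-independence of the constant transparent. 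For \cref{sup-integrable} you actually go further than the paper: the paper simply asserts that the supremum is $\mathcal O(1/\tau^{2})$ as $\tau\to\infty$, whereas your dichotomy (either $x+\tau\ge(1+c)\tau/2$ or $x-c\tau\le-(1+c)\tau/2$, since the two centers are separated by $(1+c)\tau$) supplies the missing justification. The only cosmetic point is your claim that $1\le\langle\cdot\rangle_+\le\sqrt2$ on the transition interval $[0,1]$, which the paper's definition does not literally guarantee; but since $\langle\cdot\rangle_+$ is a fixed smooth function bounded on $[0,1]$ with $\langle\cdot\rangle_+\ge 1$, the compactness argument you invoke gives the needed self-bound there regardless, so this is not a gap.
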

See \cref{lemma-appendix} for proof. 

We are now ready to prove that \(\phi\) (and its time derivative) remain uniformly bounded in time.
\begin{prop}
	Fix \(T_0> 0\) and suppose that \(f \in C([-T_0,T_0], \mcX_2^{k+1}(\R))\) and \(g \in C([-T_0,T_0], H^{k+1}_2(\R)) \), with \(k\geq 3\) an integer. Also, suppose that \(f(X,T)\to f_+\) as \(X\to \infty\) for any \(T\in[-T_0,T_0]\). Then there exists a constant \(C>0\) such that 
	\begin{equation}\label{phi-bound}
		\sup_{t\in[-\epsilon^{-3}T_0,\epsilon^{-3}T_0]} \|\phi(\cdot,\epsilon t)\|_{H^k} \leq C \Bigg( \sup_{t\in[-\epsilon^{-3}T_0,\epsilon^{-3}T_0]} \left\{\| f(\cdot, \epsilon^3t) \|_{\mathcal X^{k+1}_{2}},  \| g(\cdot, \epsilon^3t) \|_{H^{k+1}_2} \right\}\Bigg)^3
	\end{equation}
	and
	\begin{equation}\label{psi-bound}
		\sup_{t\in[-\epsilon^{-3}T_0,\epsilon^{-3}T_0]} \|\psi(\cdot,\epsilon t)\|_{H^{k-1}} \leq C \Bigg( \sup_{t\in[-\epsilon^{-3}T_0,\epsilon^{-3}T_0]} \left\{\| f(\cdot, \epsilon^3t) \|_{\mathcal X^{k+1}_{2}},  \| g(\cdot, \epsilon^3t) \|_{H^{k+1}_2} \right\}\Bigg)^3,
	\end{equation}
	where \(\psi = \partial_2 \phi\).
\end{prop}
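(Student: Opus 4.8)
The plan is to read \cref{phi-pde} as a one-dimensional inhomogeneous wave equation with zero Cauchy data, solve it by Duhamel's formula, exploit the two spatial derivatives sitting in front of the forcing to cancel the low-frequency singularity of the sine propagator, and then extract from the forcing a single scalar, time-dependent factor that is integrable in $\tau$.

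First I would write the forcing in \cref{phi-pde} as $N=\partial_1^2 M$ with
\[
M(\xi,\tau)=-\tfrac12\left[(f^2(\xi+\tau,\epsilon^2\tau)-f_+^2)\,g(\xi-c\tau,\epsilon^2\tau)+(f(\xi+\tau,\epsilon^2\tau)-f_+)\,g^2(\xi-c\tau,\epsilon^2\tau)\right].
\]
On the Fourier side the solution with vanishing data is $\widehat\phi(k,\tau)=-\int_0^\tau|k|\sin((\tau-s)|k|)\,\widehat M(k,s)\,ds$ and, with $\psi=\partial_2\phi$, $\widehat\psi(k,\tau)=-\int_0^\tau k^2\cos((\tau-s)|k|)\,\widehat M(k,s)\,ds$; the factor $k^2$ coming from $N$ has absorbed the $1/|k|$ of the sine kernel, so there is no low-frequency obstruction. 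Using $|\sin|,|\cos|\le1$, Plancherel, and Minkowski's integral inequality, this yields for $\tau$ in the relevant range
\[
\|\phi(\cdot,\tau)\|_{H^k}\le\int_0^{|\tau|}\|M(\cdot,s)\|_{H^{k+1}}\,ds,\qquad \|\psi(\cdot,\tau)\|_{H^{k-1}}\le\int_0^{|\tau|}\|M(\cdot,s)\|_{H^{k+1}}\,ds,
\]
which already explains the loss of one derivative in \cref{psi-bound} and the hypothesis that $f$ and $g$ carry $k+1$ derivatives. (Negative $\tau$ are handled identically using the time-reversal symmetry of the wave equation, which is why I write $|\tau|$.)

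The heart of the argument is the pointwise-in-$s$ bound
\[
\|M(\cdot,s)\|_{H^{k+1}}\le C\left(\sup_{|T|\le T_0}\left\{\|f(\cdot,T)\|_{\mcX^{k+1}_2},\ \|g(\cdot,T)\|_{H^{k+1}_2}\right\}\right)^3\ \sup_{x\in\R}\frac{1}{\langle x+s\rangle_+^2\langle x-cs\rangle^2}.
\]
To get it I would factor out the weights: put $\tilde f:=(f-f_+)\langle\cdot\rangle_+^2\in\mcX^{k+1}(\R)$ and $\tilde g:=g\langle\cdot\rangle^2\in H^{k+1}(\R)$, whose norms are controlled by $\|f\|_{\mcX^{k+1}_2}$ and $\|g\|_{H^{k+1}_2}$, and write, with $w_s(x):=\langle x+s\rangle_+^{-2}\langle x-cs\rangle^{-2}$, each summand of $M(\cdot,s)$ as a product of a translate of a fixed $\mcX^{k+1}$ function, a translate of a fixed $H^{k+1}$ function, and $w_s$; the fixed functions are assembled from $\tilde f$, $f+f_+$, $\tilde g$, $g$ by \cref{prod-rule-1-lem} and \cref{prod-rule-2-lem} together with the elementary facts $H^{k+1}(\R)\hookrightarrow\mcX^{k+1}(\R)$ and $\langle\cdot\rangle_\pm^{-2}\in\mcX^{k+1}(\R)$, and this is where the total power $3$ in $(f,g)$ appears. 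Then I would group $w_s$ with the $H^{k+1}$ factor: by the Leibniz rule together with \cref{Ck-bound}, every derivative of $w_s$ of order at most $k+1$ is dominated in $L^\infty$ by $C\sup_x w_s(x)$, so $\|(\text{a translate of }\tilde g)\,w_s\|_{H^{k+1}}\le C\|\tilde g\|_{H^{k+1}}\sup_x w_s(x)$; multiplying by the remaining $\mcX^{k+1}$ factor via \cref{prod-rule-1-lem} gives the displayed estimate. It is crucial that a full first power of $\sup_x w_s$ be extracted — achieved by putting all derivatives of $w_s$ into $L^\infty$ against the $L^2$ factor $\tilde g$ — since the cheaper bound that only keeps $(\sup_x w_s)^{1/2}$ fails to be integrable in $s$.

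Finally I would combine the two displays: since $\tau=\epsilon t$ ranges over $[-\epsilon^{-2}T_0,\epsilon^{-2}T_0]$ when $|t|\le\epsilon^{-3}T_0$, and $\epsilon^2 s\in[0,T_0]$ for $s$ in the relevant range,
\[
\sup_{|t|\le\epsilon^{-3}T_0}\|\phi(\cdot,\epsilon t)\|_{H^k}\le C\left(\sup_{|T|\le T_0}\left\{\|f(\cdot,T)\|_{\mcX^{k+1}_2},\ \|g(\cdot,T)\|_{H^{k+1}_2}\right\}\right)^3\int_0^\infty\sup_{x\in\R}\frac{1}{\langle x+s\rangle_+^2\langle x-cs\rangle^2}\,ds,
\]
and the last integral is finite by \cref{sup-integrable}, giving \cref{phi-bound}; the same computation gives \cref{psi-bound} for $\psi$ in $H^{k-1}$. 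The one point needing a word of care for uniformity in $\epsilon$ is that $c=c(\epsilon,f_+)\in(0,1]$ stays bounded away from $0$ for $\epsilon$ small, so the bound furnished by \cref{sup-integrable} can be taken independent of $\epsilon$. I expect the main obstacle to be the pointwise-in-$s$ estimate on $\|M(\cdot,s)\|_{H^{k+1}}$, and specifically the need to keep the first — not merely the half — power of $\sup_x w_s$, since this is exactly what makes the $s$-integral, and hence the uniform-in-time bound, converge.
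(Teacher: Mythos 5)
Your proposal is correct and follows essentially the same route as the paper's proof: Duhamel/variation of constants on the Fourier side for the wave equation with zero data, letting the two derivatives in the forcing absorb the $1/|k|$ of the sine kernel, then bounding the forcing in $H^{k+1}$ by the cubic product of weighted norms times $\sup_x\langle x+\tau\rangle_+^{-2}\langle x-c\tau\rangle^{-2}$ via \cref{prod-rule-1-lem,prod-rule-2-lem} and \cref{Ck-bound}, and closing with \cref{sup-integrable}. The only difference is that you spell out the weight-factoring step (the $\tilde f$, $\tilde g$, $w_s$ decomposition and the need for the full first power of $\sup_x w_s$) more explicitly than the paper, which compresses it into a single displayed chain of inequalities.
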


\begin{proof}
	Set \(\partial_2 \phi = \psi\). Taking the Fourier transform \(\mathcal F\) on both sides of \cref{phi-pde} and writing the ODE as a first order system, we get that 
	\begin{equation*}
		\begin{aligned}
			&\partial_2 \begin{bmatrix} \hat \phi(k,\tau) \\ \hat \psi(k,\tau) \end{bmatrix} = \begin{bmatrix}\hat \psi(k,\tau) \\ -k^2 \hat\phi(k,\tau) \end{bmatrix} \\ &+ \begin{bmatrix}
				0 \\   \frac 1 2 k^2 \mathcal F[ (f^2(\cdot+\tau),\epsilon^2\tau)-f_+^2)g(\cdot-c\tau,\epsilon^2\tau) +(f(\cdot+\tau,\epsilon^2\tau)-f_+)g^2(\cdot-c\tau,\epsilon^2\tau)](k)
			\end{bmatrix}.
		\end{aligned}
	\end{equation*}
	The semigroup generated by the linear part can be computed explicitly to be
	\begin{equation*}
		\begin{bmatrix}
			\cos(k\tau)  & \frac 1 k \sin(k \tau) \\
			-k\sin(k\tau) & \cos(k\tau)
		\end{bmatrix}.
	\end{equation*} 
	Putting the solution into variation of constants form with initial conditions set to zero gives
	\begin{equation*}
		\begin{aligned}
			&\hat  \phi(k,T) = \frac 1 2 \int_0^Tk\sin(k(T-\tau)) \times\\
			&\quad\mathcal F[ (f^2(\cdot+\tau),\epsilon^2\tau)-f_+^2)g(\cdot-c\tau,\epsilon^2\tau) +(f(\cdot+\tau,\epsilon^2\tau)-f_+)g^2(\cdot-c\tau,\epsilon^2\tau)](k)\, d\tau
		\end{aligned}
	\end{equation*}
	and 
	\begin{equation}\label{psi-fourier-transform}
		\begin{aligned}
			&\hat  \psi(k,T) = \frac 1 2 \int_0^Tk^2\cos(k(T-\tau)) \times\\
			&\quad\mathcal F[ (f^2(\cdot+\tau,\epsilon^2\tau)-f_+^2)g(\cdot-c\tau,\epsilon^2\tau) +(f(\cdot+\tau,\epsilon^2\tau)-f_+)g^2(\cdot-c\tau,\epsilon^2\tau)](k)\, d\tau
		\end{aligned}
	\end{equation}
	Hence we can get that 
	\begin{equation}\label{phi-sobolev-bound}
		\begin{aligned}
			&\|\phi(\cdot, T) \|_{H^k} \\
			&\quad\leq C \| \hat\phi(\cdot, T) \|_{H^0_k} \\
			&\quad\leq C \int_0^T \| \partial_1 ((f^2(\cdot+\tau)-f_+^2)g(\cdot-c\tau)) \|_{H^k} + \| \partial_1( (f(\cdot+\tau)-f_+)g^2(\cdot-c\tau)) \|_{H^k} \, \mathrm d \tau \\
			&\quad\leq C \int_0^T \| f(\cdot+\tau)\partial_1 f(\cdot + \tau)g(\cdot-c\tau) \|_{H^k} + \|(f^2(\cdot+\tau) -f_+^2) \partial_1 g(\cdot - c\tau) \|_{H^k}  \\
			&\qquad + \|\partial_1f(\cdot+\tau) g^2(\cdot - c\tau) \|_{H^k} + \| (f(\cdot + \tau) -f_+) \partial_1 g(\cdot - c\tau) \|_{H^k}\, \mathrm{d}\tau \\ 
			&\quad\leq C \int_0^T \sup_{x\in\R} \frac 1 {\langle x + \tau\rangle_+^2 \langle x - c\tau\rangle^2} \times \Bigg( \|f\|^2_{\mathcal X^{k+1}_{2}} \| g \|_{H^{k+1}_2} + \|f\|_{\mathcal X^{k+1}_{2}} \| g \|^2_{H^{k+1}_2} \Bigg) \, \mathrm d \tau, 
		\end{aligned}
	\end{equation}
	whence \cref{phi-bound} follows. The proof for \cref{psi-bound} is analogous.
\end{proof}

\section{Setup of Lattice Equations}\label{sec:lattice-eqns}

The scalar second-order differential equation \cref{fput-lattice-equations-strain-variables} with potential \(V\) given by \cref{potential} can be rewritten as the following first-order system:
\begin{equation}\label{first-order-lattice-eqns}
	\left\{\begin{aligned}\dot u _n &= q_{n+1} - q_n, \\
		\dot q_n &= u_n - u_{n-1} - \frac{1} 6 ( u_n^3 - u^3_{n-1}),\end{aligned} \right. \quad n \in \Z.
\end{equation}

Recall that \(u_{n} = x_{n+1} - x_n\), so we have that \(u_n\) physically represents the displacement between two neighbors on the lattice and \(q_n\) is equal to 
\begin{equation*}
	q_n(t) = \sum_{k=-\infty}^{n-1} \dot u_k(t) = \sum_{k=-\infty}^{n-1} [\dot x_{k+1}(t) - \dot x_k(t)] = \dot x_n(t)
\end{equation*}
and so represents the velocity at a lattice point (assuming that \(\dot x_k(t) \to 0\) as \(k\to-\infty\)). Note that we have the flexibility to add or subtract a constant from \(q\) without changing the dynamics on \(u\) (a fact that we use later to adjust the approximation and guarantee the error terms are in \(\ell^2(\Z)\)). 

\begin{rem}
	Writing the equations for the FPUT lattice in the form given by \cref{first-order-lattice-eqns} also puts the system into a Hamiltonian framework (when \(u, q\in\ell^2(\Z)\)). Here the equations are of the form
	\begin{equation*}
		\dot U = J \mcH'(U)
	\end{equation*}
	where \(U = (u,q)\), \(J\) is the skew-symmetric operator given by
	\begin{equation*}
		J = \begin{bmatrix}
			0 & e^\partial - 1 \\ 1 - e^{-\partial} & 0
		\end{bmatrix}
	\end{equation*}
	and \(\mcH(U) = \sum_{n\in\Z} \frac 1 2 q_n^2 + V(u_n)\). The operators \(e^\partial\) and \(e^{-\partial}\) are the forward and backward shift operators, respectively. So we have \((e^\partial u)_n = u_{n+1}\) and \((e^{-\partial}u)_n = u_{n-1}\).
\end{rem}

We will now introduce the small-amplitude, long-wavelength ansatz for the system in \cref{first-order-lattice-eqns}, but we first must assume certain regularity and decay of \(f\) and \(g\).
\begin{assum}\label{assumption-1}
	Let \(f\) and \(g\) be solutions of \cref{f-mKdV,g-gKdV}, respectively. Assume that \[f\in C([-\tau_0, \tau_0], \mcX_2^6(\R)) \quad \text{ and } \quad g\in C([-\tau_0,\tau_0],H_2^6(\R))\] for some \(\tau_0>0\) fixed. Furthermore, assume that \(f\) has fixed limits in its spatial variable at \(\pm \infty\) given by \(f_{\pm}\), respectively.
\end{assum}

\begin{rem}  While we are unaware of any general existence theorems for the equations \cref{f-mKdV,g-gKdV}
	in the weighted spaces $\mcX^k_2$ and $H^k_2$, we note that for \cref{f-mKdV} the kink solutions are examples of solutions that lie in $\mcX^k_2$ for all time and have non-zero limits at infinity.  Furthermore, from the stability results on kink solutions of mKdV, any solution with initial  conditions close to a kink will also remain in $\mcX^k_2$ for all time.  In addition, in \cite{Kappeler:2008}, while not working with exactly the same functions spaces we do,  the existence of  solutions of the mKdV equation with general non-zero limits at infinity is proven.  (In fact, the results of \cite{Kappeler:2008} even allow for solutions that grow at infinity.)  For equation \cref{g-gKdV} which governs the $g$-part of the approximation, we first note that this equation is a form of the Gardner equation which has soliton solutions, and hence again, there are families of solutions that remain in $H^k_2$ for all time.  Furthermore, since our results below require only existence of solutions for finite times, in these spaces, one can use the local well-posedness results for \cref{g-gKdV} in Sobolev spaces $H^s$ and $H^s_{2s}$, $s>3/2$,  given in \cite{Kato:1983}, to conclude that one has local well-posedness in $H^6_2$.
\end{rem}

The small-amplitude, long-wavelength ansatz for \(u_n\) and \(q_n\) is then given by
\begin{equation}\label{u-ansatz}
	u_n(t) = \epsilon f(\epsilon(n+t), \epsilon^3t) + \epsilon g(\epsilon(n-ct), \epsilon^3 t) + \epsilon^3 \phi(\epsilon n , \epsilon t) + \mcU_n(t)
\end{equation}
and 
\begin{equation}\label{q-ansatz}
	q_n(t) = \epsilon F(\epsilon(n+t), \epsilon^3 t) + \epsilon G(\epsilon(n-ct), \epsilon^3 t) + \epsilon^3 \Phi(\epsilon n, \epsilon t) - \epsilon F_- + \mcQ_n(t).
\end{equation}
The wave speed \(c\) is again given by \cref{ansatz-wave-speed}. 

The form that the ansatz takes for \(u_n(t)\) is clear. For \(q_n(t)\) we need to define \(F\), \(G\), and \(\Phi\) (where \(F_-\) is a constant to specified shortly thereafter). One would expect  
\begin{equation*}
	\begin{aligned}
		q_n(t) =& \sum_{k=-\infty}^{n-1} \dot u_n(t) \\
		\approx& \sum_{k=-\infty}^{n-1} [ \epsilon^2 \partial_1 f (\epsilon(k+t) ,\epsilon^3 t) + \epsilon^4 \partial_2 f(\epsilon(k+t), \epsilon^3 t) + \epsilon^2 c \partial_1 g(\epsilon (k-ct), \epsilon^3 t) +\epsilon^4 \partial_2 g(\epsilon(k-ct), \epsilon^3 t)  \\
		&\qquad + \epsilon^4 \partial_2 \phi(\epsilon k, \epsilon t) ].
	\end{aligned}
\end{equation*}
However, the final summation does not have a simple closed form, and so would be difficult to use. Instead, we use the fact that \(q_{n+1}(t) - q_n(t)= \dot u_n(t)\) and enforce equality with the ansatz for \(\dot u_n(t)\) up to sufficiently high orders of \(\epsilon\). Thus, we choose \(F\), \(G\), and \(\Phi\) so that 
\begin{equation*}
	\begin{aligned}
		\epsilon F(\epsilon (n+1 + t), \epsilon^3 t) - \epsilon F(\epsilon(n+t), \epsilon^3 t) &= \epsilon^2 \partial_1 f(\epsilon(n+t), \epsilon^3 t) + \epsilon^4 \partial_2 f(\epsilon (n+1), \epsilon^3 t) + \mathcal O(\epsilon^6) \\
		\epsilon G(\epsilon(n+1 -ct), \epsilon^3 t) - \epsilon G(\epsilon(n-ct), \epsilon^3 t) &=  \epsilon^2 c \partial_1 g(\epsilon (n-ct), \epsilon^3 t)  +\epsilon^4 \partial_2 g(\epsilon(n-ct), \epsilon^3 t) + \mathcal O(\epsilon^6) \\
		\epsilon^3 \Phi(\epsilon(n+1), \epsilon t) - \epsilon^3 \Phi(\epsilon n, \epsilon t) &= \epsilon^4 \partial_2 \phi(\epsilon n, \epsilon t) + \mathcal O(\epsilon^6) .
	\end{aligned}
\end{equation*}
Thus, after some calculations, we get the following:
\begin{align*}
	F &:= f - \frac{\epsilon} 2 \partial_1 f + \frac{\epsilon^2} 8 \partial_1^2 f - \frac{\epsilon^2}{12} f^3  - \frac{\epsilon^3}{48} \partial_1^3 f + \frac{\epsilon^3} 8 f^2 \partial_1 f \\
	G &:= - g + \frac{\epsilon}{2}\partial_1 g + \frac{\epsilon^2 f_+^2} 4  g + \frac{\epsilon^2}{12}(g^3 + 3f_+ g^2) -\frac{ \epsilon^2} 8 \partial_1^2 g  + \frac{\epsilon^3}{48} \partial_1^3 g  - \frac{\epsilon^3}{24} \partial_1(g^3 + 3f_+ g^2) - \frac{\epsilon^3 f_+^2} 8 \partial_1 g  \\
	\Phi &:=  \partial_1^{-1}\psi - \frac{\epsilon} 2 \psi.
\end{align*}
Here \(\psi = \partial_2 \phi\) and \(\partial_1^{-1}\) is defined as a Fourier multiplier. That \(\partial_1^{-1}\psi\) is well-defined and in \(H^5(\R)\) follows from \cref{psi-fourier-transform}. Namely, we have that 
\begin{equation*}
	\begin{aligned}
		\mcF[\partial_1^{-1} \psi(\cdot, T)](k) &= (ik)^{-1} \hat\psi(k,T) \\
		& = \frac{-i} 2 \int_0^T k \cos(k(T-\tau)) \times \\ 
		&\quad \mcF[(f^2(\cdot+\tau,\epsilon^2\tau)-f_+^2)g(\cdot-c\tau,\epsilon^2\tau) +(f(\cdot+\tau,\epsilon^2\tau)-f_+)g^2(\cdot-c\tau,\epsilon^2\tau)](k) \, d\tau
	\end{aligned}
\end{equation*}
and (following the same calculations in \cref{phi-sobolev-bound}) 
\begin{equation*}
	\| \partial_1^{-1}\psi(\cdot,T) \|_{H^5} \leq C \int_0^T \sup_{x\in\R} \frac 1 {\langle x + \tau\rangle_+^2 \langle x - c\tau\rangle^2} \times \Bigg( \|f\|^2_{\mathcal X^{6}_{2}} \| g \|_{H^{6}_2} + \|f\|_{\mathcal X^{6}_{2}} \| g \|^2_{H^{6}_2} \Bigg) \, \mathrm d \tau. 
\end{equation*}\Cref{assumption-1} implies that \(F\) has fixed limits in its spatial variable at \(\pm \infty\) given by \(F_\pm = f_\pm -\frac{\epsilon^2}{12} f^3_{\pm}\).

We want \(\mcU(t)\) and \(\mcQ(t)\) to be elements of \(\ell^2(\Z)\) (at least locally in time). However, to satisfy \(\mcQ(0)\in\ell^2(\Z)\) and \(\dot u_n(0) = q_{n+1}(0) - q_n(0)\), a compatibility condition must hold.
\begin{assum}\label{assumption-2}
	Assume that \[\sum_{n=-\infty}^\infty \dot u_n(0) = \epsilon F_+ - \epsilon F_-.\]
\end{assum}
Note that if this did not hold, then \(\mcQ_n(0)\not\to 0\) as \(n\to\infty\) and \(\mcQ(0)\notin \ell^2(\Z)\). That \(\mcQ_n(0) \to 0\) as \(n\to-\infty\) follows directly from the ansatz. The introduction of the constant \(\epsilon F_-\) in \cref{q-ansatz} does not affect the dynamics of \(q\) in \cref{first-order-lattice-eqns}

Substituting \cref{u-ansatz,q-ansatz} into \cref{first-order-lattice-eqns} and using the definitions of \(F\), \(G\), and \(\Phi\), we get the following equations for the \(\mcU_n\) and \(\mcQ_n\): 
\begin{equation}\label{error-lattice-eqns}
	\left\{\begin{aligned}
		\dot{\mathcal U}_n(t) =& \, \mathcal Q_{n+1}(t) - \mathcal Q_n(t) +\ResnI \\
		\dot{\mathcal Q}_n(t) =& \, \mathcal U_n(t) - \mathcal U_{n-1}(t)  \\
		&\quad - \frac 1 2 (\epsilon f(\epsilon(n+t)) + \epsilon g(\epsilon(n-ct)) + \epsilon^3\phi(\epsilon n))^2 \mathcal U_{n}(t) \\
		&\quad + \frac 12  (\epsilon f(\epsilon(n-1+t)) + \epsilon g(\epsilon(n-1-ct)) + \epsilon^3\phi(\epsilon (n-1)))^2 \mathcal U_{n-1}(t) \\
		&\quad +\ResnII + \mathcal B_n(\epsilon f + \epsilon g + \epsilon^3 \phi,  \mathcal U) 
	\end{aligned} \right. n\in\Z,
\end{equation}
where
\begin{equation*}
	\begin{aligned}
		\ResnI =& \epsilon F(\epsilon(n+1+t)) - \epsilon F(\epsilon(n+t)) \\
		&\quad + \epsilon G(\epsilon(n+1-c t) - \epsilon G(\epsilon(n-c t) + \epsilon^3 \Phi(\epsilon (n+1))   - \epsilon^3 \Phi(\epsilon n) \\
		&\quad - \epsilon^2 \partial_1 f(\epsilon(n+t)) - \epsilon^4 \partial_2
		f(\epsilon(n+t)) \\
		&\quad + \epsilon^2 c \partial_1 g(\epsilon(n-ct)) - \epsilon^4 \partial_2 g(\epsilon(n-ct)) - \epsilon^4 \partial_2 \phi(\epsilon n) ,
	\end{aligned}
\end{equation*}
\begin{equation*}
	\begin{aligned}
		\ResnII =& \epsilon f(\epsilon(n+t)) - \epsilon f(\epsilon(n-1+t)) \\
		&\quad+ \epsilon g(\epsilon(n-c t)) - \epsilon g(\epsilon(n-1-c t) + \epsilon^3 \phi(\epsilon n) - \epsilon^3 \phi(\epsilon (n-1)) \\
		&\quad - \epsilon^2 \partial_1 F(\epsilon(n+t)) - \epsilon^4 \partial_2
		F(\epsilon(n+t)) \\
		&\quad + \epsilon^2 c \partial_1 G(\epsilon(n-ct)) - \epsilon^4 \partial_2 G(\epsilon(n-ct)) - \epsilon^4 \partial_2 \Phi(\epsilon n) \\
		&\quad - \frac 1 6 \Big( (\epsilon f(\epsilon(n+t)) + \epsilon g(\epsilon(n-c t)) + \epsilon^3 \phi(\epsilon n))^3 \\
		&\hspace{6em} - (\epsilon f(\epsilon(n-1+t)) + \epsilon g(\epsilon(n-1-c t)) + \epsilon^3 \phi(\epsilon (n-1)))^3 \Big),
	\end{aligned}
\end{equation*}
and 
\begin{equation*}
	\begin{aligned}
		&\mathcal B_n(\epsilon f + \epsilon g + \epsilon^3 \phi, \mathcal U) \\
		&\quad= -\frac 1 6 \Big( 3(\epsilon f(\epsilon(n+t) + \epsilon g(\epsilon(n-ct)) + \epsilon^3 \phi(\epsilon n))  \mathcal U^2_n(t) \\
		&\quad \qquad - 3(\epsilon f(\epsilon(n-1+t) + \epsilon g(\epsilon(n-1-ct)) + \epsilon^3 \phi(\epsilon (n-1)))  \mathcal U^2_{n-1}(t) \\
		&\quad\qquad + \mathcal U_n^3(t)  - \mathcal U_{n-1}^3(t)\Big).
	\end{aligned}
\end{equation*}
The terms \(\mcU\) and \(\mcQ\) control the error associated with the ansatz in \cref{u-ansatz,q-ansatz}. Thus if these terms remain small in the \(\ell^2(\Z)\) norm, then the small-amplitude, long-wavelength ansatz will remain valid. In particular, if one has that \(\|\mcU\|_{\ell^2} \leq C\epsilon^{5/2}\), then the ansatz \(\epsilon f + \epsilon g\) is valid up to order \(\epsilon^{5/2}\) (since \(\phi\) is uniformly bounded in norm and is thus \(\mathcal O (1)\)). Similarly, if \(\mcQ\) is of order \(\epsilon^{5/2}\), then one can show that \(\dot u_n(t)\) is approximated by \(\epsilon^2 \partial_1 f + \epsilon^2 \partial_1 g\) up to order \(\epsilon^{5/2}\). Hence, controlling the norms of \(\mcU\) and \(\mcQ\) is sufficient in proving the approximation holds.

\section{Preparatory Estimates}\label{sec:estimates}

To control the dynamics of \(\mcU\) and \(\mcQ\), we need estimates of the residuals and the nonlinearity. We will frequently need to bound the \(\ell^2(\Z)\) norm of a term by the \(H^1(\R)\) norm of a function. To this end the following lemma proved in \cite{dumas2014justification} is useful.
\begin{lem}\label{h1-ell2-ineq}
	There exists \(C>0\) such that for all \(X \in H^1(\R)\) and \(\epsilon \in (0,1)\), \[\|x\|_{\ell^2} \leq C \epsilon^{-1/2} \|X\|_{H^1},\] where \(x_n := X(\epsilon n)\), \(n\in \mathbb Z\).
\end{lem}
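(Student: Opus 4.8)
The plan is to compare the Riemann-type sum $\sum_{n\in\Z}|X(\epsilon n)|^2$ against the rescaled integral $\epsilon^{-1}\int_{\R}|X(y)|^2\,dy$ and to control the discrepancy by $\|X\|_{H^1}^2$. First I would record that since $X\in H^1(\R)$, it has an absolutely continuous representative (the one-dimensional Sobolev embedding $H^1(\R)\hookrightarrow C^0(\R)$), so the pointwise values $X(\epsilon n)$ are well-defined; moreover $|X|^2$ is absolutely continuous with $\tfrac{d}{dy}|X(y)|^2 = 2X(y)X'(y)\in L^1(\R)$ by Cauchy--Schwarz. This makes the fundamental theorem of calculus available and guarantees all sums and integrals below converge absolutely.

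The core step is a per-cell estimate. For each $n$, write $|X(\epsilon n)|^2 = \epsilon^{-1}\int_{\epsilon n}^{\epsilon(n+1)}|X(\epsilon n)|^2\,dy$ and subtract the corresponding integral of $|X(y)|^2$, obtaining
\[
\Bigl| |X(\epsilon n)|^2 - \epsilon^{-1}\!\!\int_{\epsilon n}^{\epsilon(n+1)}\!\!|X(y)|^2\,dy \Bigr| \le \epsilon^{-1}\!\!\int_{\epsilon n}^{\epsilon(n+1)}\!\! \bigl| |X(\epsilon n)|^2 - |X(y)|^2 \bigr|\,dy \le 2\!\!\int_{\epsilon n}^{\epsilon(n+1)}\!\! |X(s)||X'(s)|\,ds,
\]
where in the last step I bound $\bigl||X(\epsilon n)|^2-|X(y)|^2\bigr|\le 2\int_{\epsilon n}^{\epsilon(n+1)}|X(s)||X'(s)|\,ds$ uniformly for $y\in[\epsilon n,\epsilon(n+1)]$ via the fundamental theorem of calculus, the factor $\epsilon^{-1}$ cancelling against the length of the interval. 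Summing over $n\in\Z$, and using that the cells $[\epsilon n,\epsilon(n+1)]$ tile $\R$, together with $2ab\le a^2+b^2$, gives
\[
\sum_{n\in\Z}|X(\epsilon n)|^2 \le \epsilon^{-1}\|X\|_{L^2}^2 + 2\int_\R |X(s)||X'(s)|\,ds \le \epsilon^{-1}\|X\|_{L^2}^2 + \|X\|_{H^1}^2.
\]
Since $\epsilon\in(0,1)$, the right-hand side is at most $2\epsilon^{-1}\|X\|_{H^1}^2$, so $\|x\|_{\ell^2}\le \sqrt2\,\epsilon^{-1/2}\|X\|_{H^1}$, and we may take $C=\sqrt2$.

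There is no serious obstacle here; the only point requiring a modicum of care is the justification of pointwise evaluation and of the fundamental-theorem-of-calculus step, which is standard once one passes to the absolutely continuous representative of $X$, and the bookkeeping that all the series involved converge absolutely (which follows a posteriori from the bound itself, all summands being nonnegative). Everything else is an elementary Riemann-sum comparison.
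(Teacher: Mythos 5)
Your proof is correct. The paper does not actually reproduce a proof of this lemma --- it simply cites the reference \cite{dumas2014justification} --- and your cell-by-cell comparison is essentially the standard argument behind such sampling inequalities: evaluate on each interval $[\epsilon n,\epsilon(n+1)]$, control $\bigl||X(\epsilon n)|^2-|X(y)|^2\bigr|$ by $2\int |X||X'|$ via the fundamental theorem of calculus applied to the absolutely continuous representative, sum over the tiling, and absorb the $\|X\|_{L^2}\|X'\|_{L^2}$ term using $2ab\le a^2+b^2$ and $\epsilon^{-1}\ge 1$. The per-cell bound, the justification of pointwise evaluation through the embedding $H^1(\R)\hookrightarrow C^0(\R)$, and the final constant $C=\sqrt2$ all check out, so the argument stands as a complete, self-contained proof of the lemma.
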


We have the following estimates on the residual and nonlinear terms.
\begin{lem}\label{residual-nonlinearity-bounds}
	Let \(f\) and \(g\) be solutions of \cref{f-mKdV,g-gKdV}, respectively, such that \(f\in C([-\tau_0, \tau_0] , \mcX^6_2)\) and \(g\in C([-\tau_0,\tau_0], H^6_2)\). Let \(\tau_0 > 0\) be fixed and \(\delta>0\) be defined as \begin{equation}\label{delta-defn}
		\delta := \max \left\{\sup_{\tau\in[-\tau_0, \tau_0]}\|f(\cdot,\tau)\|_{\mcX^6_2},\ \sup_{\tau\in[-\tau_0, \tau_0]} \|g(\cdot, \tau)\|_{H^6_2} \right\}
	\end{equation}
	Then there exists a \(\delta\)-independent constant \(C>0\) such that the residual and nonlinear terms satisfy
	\begin{equation}\label{res-ineq}
		\| \ResI \|_{\ell^2} + \|\ResII \|_{\ell^2} \leq C \epsilon^{11/2} (\delta + \delta^5)
	\end{equation}
	and 
	\begin{equation}\label{nonlinear-ineq}
		\| \mathcal B_n(\epsilon f + \epsilon g + \epsilon^3 \phi, \mathcal U) \|_{\ell^2} \leq C\epsilon [ (\delta+\epsilon^2\delta^3) \|\mcU\|_{\ell^2} ^2 + \|\mcU\|_{\ell^2}^3]
	\end{equation}
	for every \(t\in[-\epsilon^{-3} \tau_0, \epsilon^{-3} \tau_0]\) and \(\epsilon \in (0,1).\)
\end{lem}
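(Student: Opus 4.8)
The plan is to estimate each of the three quantities in turn, relying on \Cref{h1-ell2-ineq} to convert $H^1(\R)$ bounds on profile functions to $\ell^2(\Z)$ bounds on their lattice samplings, and on \Cref{prod-rule-1-lem,prod-rule-2-lem} to handle the products of $f$, $g$, and $\phi$. The key preliminary observation is that the residuals $\ResnI$ and $\ResnII$ are, by construction, exactly the $\mathcal O(\epsilon^6)$ remainders left over after the definitions of $F$, $G$, $\Phi$, $f$, $g$, and $\phi$ were chosen to kill all lower-order terms. So the first step is to Taylor-expand the finite differences $F(\epsilon(n+1+t))-F(\epsilon(n+t))$ etc.\ to sufficiently high order: each discrete difference equals $\epsilon\partial_1(\cdots) + \tfrac{\epsilon^2}{2}\partial_1^2(\cdots) + \cdots$, and the choices of $F$, $G$, $\Phi$ were made precisely so that terms through $\mathcal O(\epsilon^5)$ cancel against $\epsilon^2\partial_1 f$, the mKdV/Gardner right-hand sides, etc. What survives is a finite sum of terms of the form $\epsilon^{k}(\partial_1^j h)(\epsilon(n\pm t))$ with $k\geq 6$ and $j\leq 6$ (or cubic analogues like $\epsilon^6 f^2\partial_1^3 f$), plus an integral-form remainder controlled by $\|\partial_1^7(\cdots)\|$. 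Each such term is the sampling of an $H^1(\R)$ (indeed $\mcX^6_2$ or $H^6_2$) function, so \Cref{h1-ell2-ineq} gives an $\ell^2$ bound of $C\epsilon^{k-1/2}$ times a product of profile norms; since the lowest surviving power is $k=6$, this yields the $\epsilon^{11/2}$ factor in \cref{res-ineq}. The $\delta+\delta^5$ structure comes from tracking homogeneity: linear terms (from $\partial_1 f$, $\partial_1^3 f$, $\partial_2\phi$) contribute $\delta$, while the cubic terms $f^3$, $f^2\partial_1 f$, the $g^3+3f_+g^2$ pieces, and the cubed finite differences in $\ResnII$ contribute up to $\delta^3$, and $\phi$ itself is controlled by $\delta^3$ via \cref{phi-bound}, so e.g.\ a term like $\epsilon^6 \phi^2 \partial_1 \phi$-type contribution or cross terms involving two copies of $\phi$ with one $f$ push the top degree to $5$; the point is only that \emph{some} universal polynomial $\delta+\delta^5$ dominates all of them, which I would verify by a crude degree count rather than exact bookkeeping.

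For the nonlinear term \cref{nonlinear-ineq}, the plan is simpler. Write $\mathcal B_n$ as the difference of a quadratic-in-$\mcU$ part and a cubic-in-$\mcU$ part. For the quadratic part, each summand is (a lattice sampling of $\epsilon f + \epsilon g + \epsilon^3\phi$, which is bounded in $\ell^\infty$ by $C\epsilon(\delta + \epsilon^2\delta^3)$ using $\|f\|_{L^\infty}\leq\|f\|_{\mcX^6_2}$, $\|g\|_{L^\infty}\leq C\|g\|_{H^6_2}$, and $\|\phi\|_{L^\infty}\leq C\|\phi\|_{H^k}\leq C\delta^3$) times $\mcU_n^2$ or $\mcU_{n-1}^2$; taking $\ell^1 \hookrightarrow$ no, rather: $\|(\text{bounded seq})\cdot \mcU^2\|_{\ell^2} \leq \|\text{bounded seq}\|_{\ell^\infty}\|\mcU\|_{\ell^4}^2 \leq \|\text{bounded seq}\|_{\ell^\infty}\|\mcU\|_{\ell^2}^2$ by the embedding $\ell^2\hookrightarrow\ell^4$. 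This gives the $\epsilon(\delta+\epsilon^2\delta^3)\|\mcU\|_{\ell^2}^2$ term. For the cubic part, $\|\mcU_n^3 - \mcU_{n-1}^3\|_{\ell^2} \leq 2\|\mcU^3\|_{\ell^2} = 2\|\mcU\|_{\ell^6}^3 \leq 2\|\mcU\|_{\ell^2}^3$ again by $\ell^2\hookrightarrow\ell^6$. The overall prefactor $\epsilon$ on the cubic term is generous (it is really $\mathcal O(1)$, but $\epsilon<1$ so the stated bound holds), or one can simply absorb it; I would state it as in \cref{nonlinear-ineq} and note the constant is $\epsilon$-uniform.

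The main obstacle is the bookkeeping in the residual estimate: one must carefully expand every discrete difference in $\ResnI$ and $\ResnII$ to order $\epsilon^6$, substitute the definitions of $F$, $G$, $\Phi$ (which themselves contain up to $\epsilon^3$ corrections and cubic nonlinearities), use the mKdV equation \cref{f-mKdV}, the Gardner equation \cref{g-gKdV}, and the $\phi$-equation \cref{phi-pde} to eliminate $\partial_2 f$, $\partial_2 g$, $\partial_2^2\phi$, and check that everything through $\mathcal O(\epsilon^5)$ genuinely cancels, leaving only terms with an explicit $\epsilon^6$ (or higher) and Taylor remainders. This is where errors creep in, and where the factor $\epsilon^{11/2} = \epsilon^{6}\cdot\epsilon^{-1/2}$ is earned — the $\epsilon^6$ from the order of the surviving terms and the $\epsilon^{-1/2}$ loss from \Cref{h1-ell2-ineq}. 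A secondary technical point is that some surviving terms involve $\partial_2 f$ or $\partial_2 g$ (through the $\epsilon^4\partial_2 F$, $\epsilon^4\partial_2 G$ pieces), which are not directly in the hypothesis; but $\partial_2 f$ is given in terms of spatial derivatives of $f$ by \cref{f-mKdV}, so $\|\partial_2 f(\cdot,\tau)\|_{H^5_2}\leq C(\|f\|_{\mcX^6_2} + \|f\|_{\mcX^6_2}^3)$ by \Cref{prod-rule-1-lem,prod-rule-2-lem}, and similarly for $g$ via \cref{g-gKdV}; this keeps the degree count within the claimed $\delta + \delta^5$ envelope. I would also note that all bounds are uniform for $t$ in the stated interval because the profile norms are taken as suprema over $\tau\in[-\tau_0,\tau_0]$ in the definition \cref{delta-defn} of $\delta$, and $\epsilon^3 t$ ranges over $[-\tau_0,\tau_0]$ precisely when $t\in[-\epsilon^{-3}\tau_0,\epsilon^{-3}\tau_0]$.
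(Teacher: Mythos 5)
Your overall strategy is the same as the paper's: Taylor-expand the discrete differences, use the choices of \(F\), \(G\), \(\Phi\) together with \cref{f-mKdV,g-gKdV,phi-pde} to cancel everything through \(\mathcal O(\epsilon^5)\), bound the \(\mathcal O(\epsilon^6)\) integral remainders via \cref{h1-ell2-ineq} (losing \(\epsilon^{-1/2}\)) and \cref{prod-rule-1-lem,prod-rule-2-lem}, and handle \(\mathcal B_n\) with \(\ell^2\hookrightarrow\ell^4,\ell^6\). However, there is one step you treat as routine bookkeeping that is not. In \(\ResnII\) the expansion does \emph{not} cancel order by order through \(\epsilon^5\): there is a genuine surviving \(\epsilon^4\) term, namely \(-\partial_2\partial_1^{-1}\psi + \partial_1\phi - \tfrac16\partial_1\bigl(3(f^2-f_+^2)g + 3(f-f_+)g^2\bigr)\), coming from \(\epsilon^4\partial_2\Phi = \epsilon^4\partial_1^{-1}\partial_2^2\phi + \mathcal O(\epsilon^5)\). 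This is not killed by any algebraic identity among the Taylor coefficients; one must apply \(\partial_1\) to it, recognize the result as exactly \cref{phi-pde} (being careful that \(\phi\)'s spatial argument is \(\epsilon n\) while \(f\), \(g\) carry \(\epsilon(n\pm t)\)), conclude the combination is constant in the spatial variable, and then use that all of \(f-f_+\), \(g\), \(\phi\), \(\partial_1^{-1}\psi\) decay at spatial infinity to force the constant to be zero. Your plan ("use \cref{phi-pde} to eliminate \(\partial_2^2\phi\) and check that everything through \(\mathcal O(\epsilon^5)\) cancels") has the right ingredient but misses that this antiderivative-plus-decay argument is needed; without it the residual appears to be \(\mathcal O(\epsilon^4)\), not \(\mathcal O(\epsilon^6)\), and the whole bound \cref{res-ineq} fails.

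Two smaller points. First, your justification of the \(\epsilon\) prefactor on the cubic term in \cref{nonlinear-ineq} runs in the wrong direction: from \(\|\mcU_n^3-\mcU_{n-1}^3\|_{\ell^2}\leq 2\|\mcU\|_{\ell^2}^3\) and \(\epsilon<1\) you get \(\|\cdot\|\leq C\|\mcU\|^3\), which does \emph{not} imply \(\|\cdot\|\leq C\epsilon\|\mcU\|^3\); there is no \(\epsilon\) to extract from \(\mcU_n^3-\mcU_{n-1}^3\), so this factor cannot be "absorbed" the way you suggest (the paper itself passes over this with "follows immediately", and in the later energy argument the distinction is harmless since \(\mcU=\mathcal O(\epsilon^{5/2})\), but your stated reasoning is not a proof of the displayed inequality). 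Second, your attribution of the \(\delta^5\) to terms like \(\phi^2\partial_1\phi\) or two copies of \(\phi\) with one \(f\) is off: since \(\|\phi\|\lesssim\delta^3\), such terms would be degree \(7\) or \(9\) in \(\delta\) and would break a \(\delta\)-independent bound of the form \(\delta+\delta^5\). The actual source of \(\delta^5\) (as in the paper's \(I_{f,2}\), \(I_{g,2}\)) is the time derivative \(\partial_2\) hitting the cubic corrections in \(F\) and \(G\), with \(\partial_2 f\), \(\partial_2 g\) then replaced via \cref{f-mKdV,g-gKdV}, e.g. \(g^2\,\partial_1(g^3+3f_+g^2)\); your "secondary technical point" about substituting the modulation equations is the correct mechanism, so the fix is just to do the degree count there rather than via \(\phi\)-quadratic terms.
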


\begin{proof}
	We first focus on bounding \(\ResI\). Looking at the terms in \(\ResI\) involving \(f\) and \(F\) and using Taylor expansions and \cref{f-mKdV}, we get that
	\begin{equation}\label{F-res1}
			\epsilon F(\cdot + \epsilon) - \epsilon F - \epsilon^2 \partial_1 f - \epsilon^4 \partial_2 f = \epsilon^6 I_{f,1}(n,t)
	\end{equation}
	where terms of order \(\epsilon^5\) or lower exactly cancel. The term \(I_{f,1}\) contains integral remainder terms:
	\begin{equation}\label{If1}
		\begin{aligned}
			I_{f,1}(n,t) := \ &\frac{1} {24} \int_0^1 \partial_1^5 f(\epsilon(n+t+s), \epsilon^3 t)(1-s)^4\, ds - \frac{1} {12} \int_0^1 \partial_1^5 f(\epsilon(n+t+s), \epsilon^3 t)(1-s)^3\, ds \\
			+ & \frac{1} {16} \int_0^1 \partial_1^5 f(\epsilon(n+t+s), \epsilon^3 t)(1-s)^2\, ds -  \frac{1} {24} \int_0^1 \partial_1^3 (f^3)(\epsilon(n+t+s), \epsilon^3 t)(1-s)^2\, ds \\
			- & \frac{1}{48} \int_0^1 \partial_1^5f(\epsilon(n+t+s), \epsilon^3 t)(1-s)\, ds +  \frac{1}{24} \int_0^1 \partial_1^3 (f^3)(\epsilon(n+t+s), \epsilon^3 t) (1-s)\, ds.
		\end{aligned}
	\end{equation}
	Applying \cref{h1-ell2-ineq} (and \cref{prod-rule-1-lem,prod-rule-2-lem} when needed) to the terms in \cref{If1} gives that the \(\ell^2\) norm on the left-hand side of \cref{F-res1} can be bounded by \[C(\epsilon^{11/2}(\delta + \delta^3))\] for some choice of constant \(C>0\).
	
	Doing the same Taylor expansion for the \(g\) and \(G\) gives
	\begin{equation*}
			\epsilon G(\cdot + \epsilon) - \epsilon G + \epsilon^2c \partial_1 g - \epsilon^4 \partial_2 g  =  \epsilon^6 I_{g,1}(nt),
	\end{equation*}
	where the terms of order \(\epsilon^5\) and lower exactly cancel and \(I_{g,1}\) contains the integral remainder terms:
	\begin{equation}\label{Ig1}
		\begin{aligned}
			&I_{g,1}(n,t) := \\
			-&\frac{1} {24} \int_0^1 \partial_1^5 g(\epsilon(n - ct + s), \epsilon^3 t) (1-s)^4 \, ds +\frac{1} {12} \int_0^1 \partial_1^5 g(\epsilon(n - ct + s), \epsilon^3 t) (1-s)^3 \, ds \\
			+&\frac{ f_+^2} 8 \int_0^1 \partial_1^3 g(\epsilon(n - ct + s), \epsilon^3 t) (1-s)^2 \, ds +\frac{1} {24} \int_0^1 \partial_1^3(g^3)(\epsilon(n - ct + s), \epsilon^3 t) (1-s)^2 \, ds \\
			+&\frac{1} {24} \int_0^1 \partial_1^3(3f_+ g^2)(\epsilon(n - ct + s), \epsilon^3 t) (1-s)^2 \, ds - \frac{1} {16} \int_0^1 \partial_1^5g(\epsilon(n - ct + s), \epsilon^3 t) (1-s)^2 \, ds \\
			+ & \frac{1}{48}\int_0^1 \partial_1^5 g(\epsilon(n-ct+s), \epsilon^3 t)(1-s)\, ds -  \frac{1}{24} \int_0^1 \partial_1^3(g^3)(\epsilon(n-ct+s), \epsilon^3 t)(1-s)\, ds \\
			- & \frac{1}{24} \int_0^1 \partial_1^3(3f_+ g^2)(\epsilon(n-ct+s), \epsilon^3 t)(1-s)\, ds - \frac{f_+ ^2}{8} \int_0^1 \partial_1^3 g(\epsilon(n-ct+s), \epsilon^3 t) (1-s) \, ds
		\end{aligned}
	\end{equation}
	The terms in \cref{Ig1} can be controlled by \cref{h1-ell2-ineq}. Similarly we have
	\begin{equation*}
		\epsilon^3 \Phi(\epsilon(n+1), \epsilon t) - \epsilon^3\Phi(\epsilon n , \epsilon t) - \epsilon^4 \partial_2 \phi_2(\epsilon n, \epsilon t) =  \frac{\epsilon^6} 2 \int_0^1 \partial_1^2 \psi(\epsilon(n+s),\epsilon t)(1-s)^2\, ds,
	\end{equation*}
	so the \(\ell^2\) norm can also be controlled. Therefore we have 
	\begin{equation*}
		\| \ResI \|_{\ell^2} \leq C \epsilon^{11/2}(\delta + \delta^3)
	\end{equation*}
	
	The bound on \(\ResII\) can be approached similarly. Focusing on the terms with \(f\) and \(F\) in \(\ResII\), we have 
	\begin{equation}\label{F-res2}
			\epsilon f(\cdot) - \epsilon f(\cdot - \epsilon) - \epsilon^2 \partial_1 F -\epsilon^4\partial_2 F - \frac{\epsilon^3} 6 (f^3(\cdot) - f^3(\cdot - \epsilon)) = \epsilon^6 I_{f,2}(n,t).
	\end{equation}
	where the integral remainder terms are contained in \(I_{f,2}\):
	\begin{equation}\label{If2}
		\begin{aligned}
			I_{f,2}(n,t) := - & \frac{1} {24} \int_{0}^1 \partial_1^5 f (\epsilon(n+t+s), \epsilon^3 t)(s-1)^4\, ds \\
			+ & \frac{1} {12} \int_0^1 \partial_1^2(f^3)(\epsilon(n+t+s), \epsilon^3 t)(s-1)^2\, ds \\
			+ & \partial_2\left(\frac{1} 8 \partial_1^2 f - \frac{1}{12} f^3  - \frac{\epsilon}{48} \partial_1^3 f + \frac{\epsilon} 8 f^2 \partial_1 f\right)\left(\epsilon(n+t), \epsilon^3 t\right)
		\end{aligned}
	\end{equation}
	The integral terms in \cref{If2} can be controlled like before. The non-integral term can be controlled by first evaluating the derivative in time, \(\partial_2\), and replacing the terms \(\partial_2 f\) using \cref{f-mKdV}; then the terms can be controlled by \cref{h1-ell2-ineq}. Then the left-hand side of \cref{F-res2} can be bounded by a term of the form \[ C \epsilon^{11/2}(\delta + \delta^3).\]
	
	Taylor expanding the remaining terms in \(\ResII\) gives that they are equal to 
	\begin{equation*}
		\epsilon^4\left(-\partial_2 \partial_1^{-1} \psi+ \partial_1 \phi  - \frac 1 6 \partial_1(3(f^2 - f_+^2) g + 3(f-f_+) g^2)\right) + \epsilon^6 I_{g,2}(n,t)
	\end{equation*}
	where 
	\begin{equation}\label{Ig2}
		\begin{aligned}
			I_{g,2}(n,t) =& \\
			- & \frac{1} {24} \int_{0}^1 \partial_1^5 g(\epsilon(n-s-ct), \epsilon^3 t)  (s-1)^4 \, ds -  \frac{1}2 \int_0^1 \partial_1^3 \phi(\epsilon (n-s), \epsilon t) (s-1)^2\, ds \\
			- & \frac{ f_+^2}{4}\partial_1\left( \frac{f_+^2} 4  g + \frac{1}{12}(g^3 + 3f_+ g^2) -\frac{ 1} 8 \partial_1^2 g  + \frac{\epsilon}{48} \partial_1^3 g - \frac{\epsilon}{24} \partial_1(g^3 + 3f_+ g^2) - \frac{\epsilon f_+^2} 8 \partial_1 g\right) \\
			-&  \partial_2\left( \frac{f_+^2} 4  g + \frac{1}{12}(g^3 + 3f_+ g^2) -\frac{ 1} 8 \partial_1^2 g  + \frac{\epsilon}{48} \partial_1^3 g - \frac{\epsilon}{24} \partial_1(g^3 + 3f_+ g^2) - \frac{\epsilon f_+^2} 8 \partial_1 g\right) \\
			+ & \frac{1} {12} \int_0^1 \partial_1^3(g^3(\epsilon(n-s-ct), \epsilon^3 t)) (s-1)^2 ds  \\
			+ & \frac{1} {12} \int_0^1 \partial_1^3( 3g^2(\epsilon(n-s-ct), \epsilon^3 t)f(\epsilon(n-s+t), \epsilon^3 t)) (s-1)^2\, ds \\
			+ & \frac{1} {12} \int_0^1 \partial_1^3 (3g(\epsilon(n-s-ct), \epsilon^3 t)f^2(\epsilon(n-s+t), \epsilon^3 t)) (s-1)^2 \, ds
		\end{aligned}
	\end{equation}
	The other orders of \(\epsilon\) cancel exactly, as usual. The terms of order \(\epsilon^4\) are equal to 
	\begin{equation}\label{psi-phi-remainder}
		-\partial_2 \partial_1^{-1} \psi+ \partial_1 \phi  - \frac 1 6 \partial_1(3(f^2 - f_+^2) g + 3(f-f_+) g^2).
	\end{equation}
	Formally applying \(\partial_1\) implies that the above terms should be constant in space since \(\partial_2 \psi = \partial_2^2 \phi\) satisfies \cref{phi-pde}. However, one should be careful with this calculation due to the differences in scaling of the spatial variables: for example, \(\phi\) and \(\psi\)'s spatial variable is rescaled to \(\epsilon n\) while \(f\)'s is rescaled to \(\epsilon(n+t)\). Taking a derivative with respect to \(\xi = \epsilon x\) gives that \cref{psi-phi-remainder} must be constant. Since all the terms decay to zero at spatial infinity, \cref{psi-phi-remainder} is exactly zero. Thus \(\ResnII= \epsilon^6 I_{g,2}(n,t)\).
	
	The integral terms in \cref{Ig2} are bounded as before. The remaining terms in \cref{Ig2} can be bounded by evaluating \(\partial_2 g\) using \cref{g-gKdV} and then applying \cref{h1-ell2-ineq}. We can the get the following bound: \[\| \mathrm{Res}^{(2)}(t) \|_{\ell^2} \leq C \epsilon^{11/2} (\delta + \delta^3 + \delta^5).\] Interpolating between powers of \(\delta\) gives the desired inequality \cref{res-ineq}.
	
	The proof of \cref{nonlinear-ineq} follows immediately.
\end{proof}

To proceed, we construct an energy function for \cref{error-lattice-eqns} to control the \(\ell^2\) norms of \(\mcU\) and \(\mcQ\). \Cref{residual-nonlinearity-bounds} essentially states that \(\ResI\), \(\ResII\), and \(\mathcal B\) remain appropriately small. If one drops the residual and nonlinear terms from \cref{error-lattice-eqns}, then we are left with a linear (non-autonomous) Hamiltonian system. Hence, an appropriate choice of an energy function would simply be the Hamiltonian for this reduced system. Define 
\begin{equation}\label{energy-function}
	\mcE(t) = \frac 1 2 \sum_{n\in \mathbb Z} \mathcal Q_n^2(t) + \mathcal U_n^2(t) - \frac 1 2 \left(\epsilon f(\epsilon(n+t), \epsilon^3 t) + \epsilon g(\epsilon(n-ct, \epsilon^3t) + \epsilon^3 \phi(\epsilon n, \epsilon t)\right)^2 \mathcal U_n^2(t)
\end{equation}
The following lemma gives us that \(\mcE\) can be used to control \(\mcU\) and \(\mcQ\).
\begin{lem}\label{energy-coercive-bounds-lem}
	Fix \(\tau_0>0 \) and let \(\delta\) be given by \cref{delta-defn} . There exists \(\epsilon_0 = \epsilon_0(\delta) >0\) sufficiently small such that for every \(\epsilon \in (0,\epsilon_0)\) and for every local solution \((\mathcal U, \mathcal Q) \in C^1([-\tau_0\epsilon^{-3}, \tau_0\epsilon^{-3}], \ell^2(\mathbb Z))\) of \cref{error-lattice-eqns}, the energy-type quantity given in \cref{energy-function} is coercive with the bound
	\begin{equation}\label{coercive-bound}
		\|\mathcal Q(t) \|_{\ell^2}^2 + \| \mathcal U (t) \|_{\ell^2}^2 \leq  4 \mathcal E(t), \quad \text{for } t\in(-\tau_0\epsilon^{-3}, \tau_0\epsilon^{-3}).
	\end{equation}
	Moreover, there exists \(C> 0\) independent of \(\epsilon\) and \(\delta\) such that 
	\begin{equation*}
		\left|\frac{d\mathcal E}{dt} \right| \leq C \mathcal E^{1/2}\left[ \epsilon^{11/2} (\delta + \delta^5)  + \epsilon^3\delta^2\mathcal E^{1/2} + \epsilon(\delta + \mathcal{E}^{1/2})\mathcal E\right]
	\end{equation*}
	for every \(t\in [-\tau_0\epsilon^{-3}, \tau_0\epsilon^{-3}]\) and \(\epsilon \in (0,\epsilon_0)\). 	  
\end{lem}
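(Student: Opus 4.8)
The plan is to prove the two assertions separately, both resting on the observation that $\mcE$ is, up to the slow time dependence of its coefficients, precisely the Hamiltonian of the ``frozen'' linear system obtained by deleting $\ResnI$, $\ResnII$ and $\mathcal B_n$ from \cref{error-lattice-eqns}. Throughout I write $W_n(t) := \epsilon f(\epsilon(n+t),\epsilon^3 t) + \epsilon g(\epsilon(n-ct),\epsilon^3 t) + \epsilon^3\phi(\epsilon n,\epsilon t)$, so that $\mcE(t) = \tfrac12\|\mcQ(t)\|_{\ell^2}^2 + \tfrac12\|\mcU(t)\|_{\ell^2}^2 - \tfrac14\sum_n W_n^2\mcU_n^2$.

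For the coercivity bound \cref{coercive-bound}, I would estimate $\sum_n W_n^2\mcU_n^2 \le \|W\|_{\ell^\infty}^2\|\mcU\|_{\ell^2}^2$ and control $\|W\|_{\ell^\infty} \le \epsilon\|f\|_{L^\infty} + \epsilon\|g\|_{L^\infty} + \epsilon^3\|\phi\|_{L^\infty} \le C\epsilon(\delta + \epsilon^2\delta^3)$, using \cref{delta-defn} for $f,g$ and the Sobolev embedding $H^k\hookrightarrow L^\infty$ together with \cref{phi-bound} for $\phi$. Then, choosing $\epsilon_0=\epsilon_0(\delta)$ small enough that $\|W\|_{\ell^\infty}^2\le 1$, one gets $\mcE(t)\ge \tfrac12\|\mcQ\|_{\ell^2}^2 + \tfrac14\|\mcU\|_{\ell^2}^2 \ge \tfrac14(\|\mcQ\|_{\ell^2}^2 + \|\mcU\|_{\ell^2}^2)$, which is \cref{coercive-bound} and in particular gives $\mcE\ge 0$ so that $\mcE^{1/2}$ makes sense.

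For the derivative estimate, I would differentiate $\mcE$ in $t$, substitute $\dot{\mcU}_n,\dot{\mcQ}_n$ from \cref{error-lattice-eqns}, and sort the result. The terms that are purely quadratic in $(\mcU,\mcQ)$ fall into two groups: the discrete-wave-operator contribution $\sum_n\mcQ_n(\mcU_n-\mcU_{n-1}) + \sum_n\mcU_n(\mcQ_{n+1}-\mcQ_n)$, and the contribution of the linearized cubic, $-\tfrac12\sum_n\mcQ_n(W_n^2\mcU_n - W_{n-1}^2\mcU_{n-1}) - \tfrac12\sum_n W_n^2\mcU_n(\mcQ_{n+1}-\mcQ_n)$. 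A shift of summation index shows each of these two groups vanishes identically -- this is exactly the statement that $\mcE$ is the Hamiltonian of the frozen system. What survives is
\[
\frac{d\mcE}{dt} = \sum_n \Big(\mcU_n - \tfrac12 W_n^2\mcU_n\Big)\ResnI + \sum_n\mcQ_n\big(\ResnII + \mathcal B_n\big) - \tfrac12\sum_n W_n(\partial_t W_n)\mcU_n^2 .
\]
The residual and nonlinear pairings are then handled by Cauchy--Schwarz in $\ell^2$, the bounds \cref{res-ineq,nonlinear-ineq} of \cref{residual-nonlinearity-bounds}, the sup bound $\|W\|_{\ell^\infty}\le C\epsilon\delta$, and the coercivity \cref{coercive-bound} to replace $\|\mcU\|_{\ell^2},\|\mcQ\|_{\ell^2}$ by $\mcE^{1/2}$; absorbing the subdominant $\epsilon^2\delta^3$ into $\delta$ (valid once $\epsilon_0\le 1/\delta$) produces the $C\epsilon^{11/2}(\delta+\delta^5)\mcE^{1/2}$ and $C\epsilon(\delta+\mcE^{1/2})\mcE^{3/2}$ contributions. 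For the last sum, the point is that $\partial_t W_n = \epsilon^2(\partial_1 f - c\,\partial_1 g) + \epsilon^4(\partial_2 f + \partial_2 g + \psi)$ is of size $\epsilon^2$; rewriting $\partial_2 f,\partial_2 g$ via \cref{f-mKdV,g-gKdV} in terms of spatial derivatives, using \cref{phi-bound,psi-bound} for $\phi,\psi$, and \cref{prod-rule-1-lem,prod-rule-2-lem} plus Sobolev embedding, gives $\|W\,\partial_t W\|_{\ell^\infty}\le C\epsilon^3\delta^2$ for $\epsilon$ small, hence this sum is $\le C\epsilon^3\delta^2\|\mcU\|_{\ell^2}^2 \le C\epsilon^3\delta^2\mcE$, i.e.\ the $\epsilon^3\delta^2\mcE^{1/2}$ term (times the overall $C\mcE^{1/2}$). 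Collecting the contributions yields the claimed inequality.

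I expect the main obstacle to be bookkeeping rather than conceptual: verifying that $\mcE$ has been chosen precisely enough that the only surviving quadratic term is the coefficient-derivative term $\sum_n W_n(\partial_t W_n)\mcU_n^2$, and then extracting the sharp $\epsilon^3\delta^2$ scaling for it -- the gain of a power of $\epsilon$ over the naive estimate comes exactly from $\partial_t$ acting on the slow profiles $f,g,\phi$ and producing a spatial derivative (hence a factor $\epsilon$) rather than an $\mcO(1)$ quantity. A secondary subtlety is that $\phi$ and $\psi$ are only $\mcO(1)$, not small, so every place they appear must be controlled through the uniform-in-time bounds \cref{phi-bound,psi-bound}, and the smallness threshold $\epsilon_0$ is consequently allowed to depend on $\delta$.
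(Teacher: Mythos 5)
Your proposal is correct and follows essentially the same route as the paper: the same smallness choice of \(\epsilon_0(\delta)\) to make the energy coercive, the same cancellation of the purely quadratic (Hamiltonian) terms in \(d\mathcal E/dt\) leaving exactly the residual, nonlinear, and coefficient-derivative pairings, and the same Cauchy--Schwarz/H\"older estimates combined with \cref{residual-nonlinearity-bounds} and absorption of higher powers of \(\epsilon\) (with \(\epsilon_0\) depending on \(\delta\)) to reach the stated bound. The only differences are cosmetic: you spell out the index-shift cancellation and the \(\|W\,\partial_t W\|_{\ell^\infty}\lesssim \epsilon^3\delta^2\) computation that the paper carries out implicitly when listing and absorbing the terms \(\epsilon^3\delta^2+\epsilon^5\delta^2+\epsilon^5\delta^4+\epsilon^7\delta^4+\epsilon^7\delta^6\).
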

\begin{proof}
	Note that \(\delta>0\) can be used to control the \(L^\infty(\R)\) norms of \(f\), \(g\), and \(\psi\). Thus we can choose \(\epsilon_0\) small enough so that for \(\epsilon \in (0,\epsilon_0)\) we have 
	\begin{equation*}
		1 - \frac 12 \left( \epsilon \| f\|_{L^\infty} + \epsilon \|g\|_{L^\infty} + \epsilon^3 \| \phi \|_{L^\infty} \right)^2 \geq \frac 12,
	\end{equation*}
	independent of the particular choices of \(f\) and \(g\). 	Hence
	\begin{equation*}
		\mathcal E(t) \geq \frac 1 2 \| \mathcal Q \|_{\ell^2}^2 + \frac 1 4 \| \mathcal U\|_{\ell^2}^2 \geq \frac 1 4\| \mathcal Q \|_{\ell^2}^2 + \frac 1 4 \| \mathcal U\|_{\ell^2}^2
	\end{equation*}
	and \cref{coercive-bound} follows.
	
	Now we take the time derivative of \(\mathcal E\) to get that 
	\begin{equation*}
		\begin{aligned}
			\frac{d\mcE}{dt} = \sum_{n \in \Z} &\mathcal Q_n(t) \ResnII + \mathcal Q_n(t) 	\mathcal B_n(\epsilon f + \epsilon g + \epsilon^3 \phi, \mathcal U(t)) \\
			&+ \mathcal U_n(t) \ResnI \left( 1 - \frac 12 (\epsilon f+ \epsilon g + \epsilon^3 \phi)^2  \right) \\
			&+ \mathcal U_n^2(t)(\epsilon f + \epsilon g + \epsilon^3 \phi) \times (\epsilon^2\partial_1 f + \epsilon^4\partial_2 f -\epsilon^2 c \partial_1 g + \epsilon^4 \partial_2 g + \epsilon^4 \partial_2 \phi).
		\end{aligned}
	\end{equation*}
	Then using the Cauchy inequality and the H\"older inequality for \(p=1\) and \(q=\infty\) we get that
	\begin{equation*}
		\begin{aligned}
			\left| \frac{d\mathcal E}{dt} \right| \leq& \| \mathcal Q \|_{\ell^2 }\times \|\ResII \|_{\ell^2} + \| \mathcal Q \|_{\ell^2} \times \| \mathcal B \|_{\ell^2}  + \|\mathcal U \|_{\ell^2} \times \| \ResnI \|_{\ell^2}\\
			& + \|\mathcal U^2 \|_{\ell^1} \times  \| (\epsilon f+ \epsilon g + \epsilon^3 \phi)  \times  (\epsilon^2\partial_1 f + \epsilon^4\partial_2 f -\epsilon^2 c \partial_1 g + \epsilon^4 \partial_2 g + \epsilon^4 \partial_2 \phi )\|_{\ell^\infty} .
		\end{aligned}
	\end{equation*}
	Note that if \(a \in \ell^2\), then \(a\in \ell^\infty\) and \(\|a\|_{\ell^\infty} \leq \|a \|_{\ell^2}\). Thus we can replace the \(\ell^\infty\) norms above with \(\ell^2\) norms. Using the results in \cref{residual-nonlinearity-bounds}, we thus have 
	\begin{equation*}
		\begin{aligned}
			\left| \frac{d\mathcal E}{dt} \right| \leq& C\Big[\mathcal E^{1/2} \epsilon^{11/2}(\delta + \delta^5) + \mathcal E^{1/2}\epsilon [(\delta + \epsilon^2 \delta^3)\mathcal E + \mathcal E^{3/2}]  \\
			&\quad + \mathcal E(\epsilon^3 \delta^2 + \epsilon^5\delta^2 + \epsilon^5 \delta^4 + \epsilon^7\delta^4 + \epsilon^7 \delta^6) \Big],
		\end{aligned}
	\end{equation*}
	where the \(C>0\) is independent of \(\epsilon\) and \(\delta\). The right-hand side of the above inequality can be simplified by taking \(\epsilon_0\) smaller. That is, taking \(\epsilon_0\) sufficiently small (dependent on \(\delta\)), we can absorb higher orders of \(\epsilon\) into lower orders. For example, \(\epsilon^3 \delta^2 + \epsilon^5\delta^2 \leq 2 \epsilon^3 \delta^2\) for \(\epsilon\) small enough. Thus we arrive at  		
	\begin{equation*}
		\left|\frac{d\mathcal E}{dt} \right| \leq C \mathcal E^{1/2}\left[ \epsilon^{11/2} (\delta + \delta^5)  + \epsilon^3\delta^2\mathcal E^{1/2} + \epsilon(\delta + \mathcal{E}^{1/2})\mathcal E\right]
	\end{equation*}
	as desired.
\end{proof}

Lastly, before we can prove our main result, we must show that for appropriate initial conditions that \(\mcU(0)\) and \(\mcQ(0)\) are suitably small. In particular, we want our initial conditions to be ``close to" the small-amplitude, long-wavelength ansatz in the sense that 
\begin{equation*}
	u_n(0) \approx \epsilon f(\epsilon n , 0) + \epsilon g(\epsilon n , 0)
\end{equation*}
and 
\begin{equation*}
	\dot u_n(0) \approx \epsilon^2 \partial_1 f(\epsilon n , 0) -\epsilon^2 \partial_ 1 g(\epsilon n,0)
\end{equation*}
where the higher-order \(\epsilon\) terms are neglected. Recall that we assume \(\phi\) and \(\partial_1\phi\) to have initial conditions exactly equal to zero, so those terms drop. A seemingly appropriate notion of ``closeness" would be in the \(\ell^2\) norm, as used in \cites{khan2017long,schneider2000counter}. However, since \(q_n(0) = \sum_{k=-\infty}^{n-1} \dot u_{k}(0)\), we may lose some decay due to the summation and \(\mcQ(0)\) will not be in \(\ell^2\). To counter this, we need some extra localization assumptions on \(\dot u_n(0)\).

\begin{assum}\label{assumption-3}
	Suppose that the initial conditions for \(u\) satisfy
	\begin{equation*}
		\| u(0) - \epsilon f(\epsilon \cdot, 0) - \epsilon g(\epsilon \cdot, 0) \|_{\ell^2} + \| \dot u(0) - \epsilon^2 \partial_1 f(\epsilon \cdot, 0) + \epsilon^2 \partial_1 g (\epsilon \cdot , 0) \|_{\ell^2_2} \leq \epsilon^{5/2}
	\end{equation*}
	and that \(f(\cdot, 0) \in \mcX^6_2\) and \(g(\cdot, 0) \in H^6_2\)
\end{assum}

The \(\ell^2_2\) norm (defined by \(\|a\|_{\ell^2_2} = \| \langle n \rangle^2 a_n\|_{\ell^2}\)) will be sufficient to get that the summation is in \(\ell^2\) based on the following lemma.
\begin{restatable}{lem}{elltwo}
	\label{ell22-lemma}
	If \(a\in \ell^2_2(\Z)\) and 
	\begin{equation*}
		\sum_{k=-\infty}^\infty a_k = 0,
	\end{equation*}
	then \(b_n = \sum_{k=-\infty}^n a_k\) is in \(\ell^2(\Z)\) and 
	\begin{equation*}
		\|b\|_{\ell^2} \leq C \|a\|_{\ell^2_2}
	\end{equation*}
	for some \(C> 0\) independent of \(a\).
\end{restatable}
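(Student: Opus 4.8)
The plan is to exploit the vanishing-sum hypothesis to turn $b_n$ into a \emph{tail} of the series $\sum_k a_k$, and then use the extra weight built into the $\ell^2_2$ norm to extract a quantitative decay rate for $b_n$ that is square-summable. Concretely, since $\sum_{k\in\Z} a_k = 0$ we may write $b_n = \sum_{k=-\infty}^n a_k = -\sum_{k=n+1}^\infty a_k$; I will use the second representation for $n\ge 0$ and the first for $n<0$, so that in either case $b_n$ is a one-sided tail sum over indices $k$ with $|k|\gtrsim |n|$, which is what forces decay of $b_n$.

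First I would estimate, for $n\ge 0$, using the Cauchy--Schwarz inequality against the weight $\langle k\rangle^{\pm 2}$,
\[
|b_n| \;\le\; \sum_{k=n+1}^\infty |a_k| \;=\; \sum_{k=n+1}^\infty \langle k\rangle^{-2}\,\big(\langle k\rangle^{2}|a_k|\big) \;\le\; \Big(\sum_{k=n+1}^\infty \langle k\rangle^{-4}\Big)^{1/2}\,\|a\|_{\ell^2_2},
\]
and symmetrically $|b_n| \le \big(\sum_{k=-\infty}^{n}\langle k\rangle^{-4}\big)^{1/2}\,\|a\|_{\ell^2_2}$ for $n<0$. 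Comparing with the integral $\int x^{-4}\,dx$ gives $\sum_{k=n+1}^\infty \langle k\rangle^{-4} \le C(1+n)^{-3}$ for $n\ge 0$, and likewise $\sum_{k=-\infty}^{n}\langle k\rangle^{-4} \le C(1+|n|)^{-3}$ for $n<0$; hence $|b_n| \le C(1+|n|)^{-3/2}\,\|a\|_{\ell^2_2}$ for every $n\in\Z$.

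Squaring and summing then yields $\|b\|_{\ell^2}^2 \le C^2\|a\|_{\ell^2_2}^2 \sum_{n\in\Z}(1+|n|)^{-3}$, and since $\sum_{n\in\Z}(1+|n|)^{-3}<\infty$ this gives both $b\in\ell^2(\Z)$ and the claimed bound $\|b\|_{\ell^2}\le C\|a\|_{\ell^2_2}$.

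There is no substantial obstacle here; the only points worth flagging are that the hypothesis $\sum_k a_k = 0$ is genuinely essential — without it $b_n$ converges to the (generically nonzero) constant $\sum_k a_k$ and $b\notin\ell^2$ — and that mere summability of $a$ (i.e.\ $a\in\ell^1$, which already follows from $a\in\ell^2_2$) would only give $b\in\ell^\infty$; it is precisely the quadratic weight $\langle n\rangle^2$ that, through Cauchy--Schwarz, upgrades boundedness to the $|n|^{-3/2}$ pointwise decay needed for $\ell^2$-summability. Care in the write-up is therefore limited to handling the finitely many small-$|n|$ indices separately from the integral-comparison tail estimate.
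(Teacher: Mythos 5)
Your proof is correct and follows essentially the same route as the paper's: Cauchy--Schwarz against the weight \(\langle k\rangle^{2}\), the zero-sum hypothesis to convert \(b_n\) into a right tail for \(n\ge 0\), integral comparison to obtain the \(\langle n\rangle^{-3/2}\) pointwise decay, and square-summation. No gaps to flag.
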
 
See \cref{lemma-appendix} for proof.

We can now show the following.
\begin{lem}\label{initial-conditions-lem}
	Let \cref{assumption-2,assumption-3} hold. Then \(\mcU(0),\mcQ(0) \in \ell^2(\Z)\) satisfy
	\begin{equation}\label{initial-condition-ineq}
		\| \mcU(0) \|_{\ell^2} + \|\mcQ(0)\|_{\ell^2} \leq C \epsilon^{5/2}
	\end{equation}
	with \(C>0\) independent of \(\epsilon\).
\end{lem}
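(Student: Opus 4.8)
\emph{Plan.} The natural first move is to evaluate the ansatz \cref{u-ansatz,q-ansatz} at $t=0$, where the interaction block degenerates: by \cref{phi-pde} we have $\phi(\cdot,0)=\partial_1\phi(\cdot,0)=0$, and the integral representation \cref{psi-fourier-transform} of $\psi=\partial_2\phi$ shows $\psi(\cdot,0)=0$, whence also $\Phi(\cdot,0)=\partial_1^{-1}\psi(\cdot,0)-\tfrac\epsilon2\psi(\cdot,0)=0$. Consequently $\mcU_n(0)=u_n(0)-\epsilon f(\epsilon n,0)-\epsilon g(\epsilon n,0)$, so the bound $\|\mcU(0)\|_{\ell^2}\le\epsilon^{5/2}$ is exactly the first half of \cref{assumption-3}; the only work is in estimating $\mcQ_n(0)=q_n(0)-\epsilon F(\epsilon n,0)-\epsilon G(\epsilon n,0)+\epsilon F_-$.

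For this I would use $q_n(0)=\sum_{k=-\infty}^{n-1}\dot u_k(0)$ together with the telescoping identities $\epsilon F(\epsilon n,0)-\epsilon F_-=\sum_{k<n}\epsilon\bigl(F(\epsilon(k+1),0)-F(\epsilon k,0)\bigr)$ and $\epsilon G(\epsilon n,0)=\sum_{k<n}\epsilon\bigl(G(\epsilon(k+1),0)-G(\epsilon k,0)\bigr)$ (convergent because $F(\cdot,0)\to F_\pm$ and $G(\cdot,0)\to0$ under \cref{assumption-1}), to write $\mcQ_n(0)=\sum_{k<n}a_k$ with $a_k:=\dot u_k(0)-\epsilon\bigl(F(\epsilon(k+1),0)-F(\epsilon k,0)\bigr)-\epsilon\bigl(G(\epsilon(k+1),0)-G(\epsilon k,0)\bigr)$. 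Since the two telescoping series sum exactly to $\epsilon(F_+-F_-)$ and $0$, \cref{assumption-2} gives $\sum_k a_k=0$, so in particular $\mcQ_n(0)\to0$ at both ends. I would then split $a_k=e_k+h_k$, where $e_k:=\dot u_k(0)-\epsilon^2\partial_1 f(\epsilon k,0)+\epsilon^2\partial_1 g(\epsilon k,0)$ is the sequence controlled in $\ell^2_2$ by $\epsilon^{5/2}$ via \cref{assumption-3}, and $h_k$ is a smooth remainder; Taylor-expanding $F,G$ and using \cref{f-mKdV,g-gKdV} to eliminate $\partial_2 f,\partial_2 g$ (exactly as in the proof of \cref{residual-nonlinearity-bounds}, invoking \cref{prod-rule-1-lem,prod-rule-2-lem}) shows $h_k=\tilde H(\epsilon k)+\epsilon^6(\text{integral remainders})$, where $\tilde H=\tilde H[f,g]\in H^1_2$ has $\|\tilde H\|_{H^1_2}\le C\epsilon^4(\delta+\delta^3)$ and $\int_\R\tilde H=\tfrac{\epsilon^4}{12}(f_+^3-f_-^3)$, which is nonzero for a kink.

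The $e$-part is handled by \cref{ell22-lemma} after moving its total mass $\sigma:=\sum_k e_k$ to a single site: $e_k-\sigma\delta_{k0}$ has zero sum and $\ell^2_2$-norm $\le\|e\|_{\ell^2_2}+|\sigma|$, so its partial sums are bounded in $\ell^2$ by $C(\epsilon^{5/2}+|\sigma|)$. Here one needs $|\sigma|\le C\epsilon^3$, and this is where the structure of the setup is used: $\sigma=-\sum_k h_k=\epsilon(F_+-F_-)-\epsilon^2\sum_k\partial_1 f(\epsilon k,0)+\epsilon^2\sum_k\partial_1 g(\epsilon k,0)$, and a Poisson-summation (Euler–Maclaurin) estimate — valid since $f\in\mcX^6_2$ and $g\in H^6_2$ — replaces $\epsilon^2\sum_k\partial_1 f(\epsilon k,0)$ by $\epsilon(f_+-f_-)+O(\epsilon^6)$ and $\epsilon^2\sum_k\partial_1 g(\epsilon k,0)$ by $O(\epsilon^6)$, so with $F_\pm=f_\pm-\tfrac{\epsilon^2}{12}f_\pm^3$ one gets $\sigma=-\tfrac{\epsilon^3}{12}(f_+^3-f_-^3)+O(\epsilon^6)$. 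It remains to bound $\sum_{k<n}(h_k+\sigma\delta_{k0})$, which again has zero sum. For this I would \emph{not} use \cref{ell22-lemma}: on a sequence of the form $\tilde H(\epsilon\cdot)$ one has $\|\tilde H(\epsilon\cdot)\|_{\ell^2_2}\sim\epsilon^{-5/2}\|\tilde H\|_{H^1_2}$, so \cref{ell22-lemma} would lose a factor $\epsilon^{-1}$ and only yield $\epsilon^{3/2}$. Instead compare directly to the integral, $\sum_{k<n}h_k=\tfrac1\epsilon\int_{-\infty}^{\epsilon n}\tilde H+(\text{Riemann error})$, split $\int_{-\infty}^{z}\tilde H=\mathcal H_0(z)+m\,\theta(z)$ with $m=\int_\R\tilde H$, a fixed smooth cutoff $\theta$ rising from $0$ to $1$, and $\mathcal H_0\in H^1$ decaying at both ends, $\|\mathcal H_0\|_{H^1}\le C\|\tilde H\|_{H^1_2}$. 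The two non-decaying constants $\tfrac m\epsilon$ and $\sigma$ are each $O(\epsilon^3)$ and cancel (up to a negligible Poisson tail) precisely because $\sum_k a_k=0$, so the $\theta$-term drops; what is left is genuinely localized on the scale $\epsilon^{-1}$: $\tfrac1\epsilon\mathcal H_0(\epsilon\cdot)$ has $\ell^2$-norm $\le C\epsilon^{-3/2}\|\mathcal H_0\|_{H^1}\le C\epsilon^{5/2}$ by \cref{h1-ell2-ineq}, the piece proportional to $\sigma$ has $\ell^2$-norm $\le C\epsilon^{-1/2}|\sigma|\le C\epsilon^{5/2}$, and the Riemann error and $\epsilon^6$-remainder are $O(\epsilon^{7/2})$. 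Summing, $\|\mcQ(0)\|_{\ell^2}\le C\epsilon^{5/2}$.

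The main obstacle is exactly this last step. Because the running sum of the leading $\dot u$-ansatz reconstructs $f$ — hence its nonzero limits $f_\pm$, renormalized to $F_\pm$ — neither $\sum_{k<\cdot}e_k$ nor $\sum_{k<\cdot}h_k$ is in $\ell^2$ on its own; each tends to an $O(\epsilon^3)$ constant at $+\infty$, and one must verify that these constants annihilate one another, which rests jointly on the explicit value $F_\pm=f_\pm-\tfrac{\epsilon^2}{12}f_\pm^3$ built into the velocity ansatz and on the compatibility condition \cref{assumption-2}. The secondary subtlety is that \cref{ell22-lemma}, while exactly the right tool for the genuinely rough error $e$, is too lossy for the smooth remainder $h$, so that contribution must be summed by hand against the cutoff $\theta$ rather than through the lemma.
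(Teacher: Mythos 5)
Your proposal is correct in substance and shares the paper's skeleton (telescoping $F$ and $G$ against $\sum_{k<n}\dot u_k(0)$, using \cref{assumption-2} to make the total sum vanish, and \cref{ell22-lemma} for the rough part), but it diverges from the paper's proof at the decisive step, and in a way worth noting. The paper groups the full $\dot u$-ansatz error $\dot u_k(0)-\epsilon^2\partial_1 f-\epsilon^4\partial_2 f+\epsilon^2 c\,\partial_1 g-\epsilon^4\partial_2 g$ into a single summand, attributes its $\ell^2_2$ control to \cref{assumption-3}, identifies the remaining summands with the Taylor remainders $-\epsilon^6 I_{f,1}(k,0)$, $-\epsilon^6 I_{g,1}(k,0)$, and applies \cref{ell22-lemma} once. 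You instead peel off the smooth mismatch $\tilde H(\epsilon k)=-\epsilon^4\bigl[\partial_2 f+\partial_2 g+\tfrac{f_+^2}{4}\partial_1 g\bigr](\epsilon k,0)$ (i.e.\ exactly the terms by which the paper's first summand differs from the quantity \cref{assumption-3} actually controls), observe that $\|\tilde H(\epsilon\cdot)\|_{\ell^2_2}\sim\epsilon^{-5/2}\|\tilde H\|_{H^1_2}\sim\epsilon^{3/2}$ so that \cref{ell22-lemma} is too lossy there, and sum that piece by hand against $\tfrac1\epsilon\int^{\epsilon n}\tilde H$, using $\int_\R\partial_2 f(\cdot,0)=-\tfrac1{12}(f_+^3-f_-^3)$, the built-in value $F_\pm=f_\pm-\tfrac{\epsilon^2}{12}f_\pm^3$, and \cref{assumption-2} to cancel the two $O(\epsilon^3)$ plateaus, with the residual $O(\epsilon^3)$-high, $O(\epsilon^{-1})$-wide transition contributing exactly $O(\epsilon^{5/2})$ in $\ell^2$; your computation of $\int\tilde H$ and of $\sigma$ checks out. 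What your route buys is precision at the borderline: the blunt $\ell^2_2$/\cref{ell22-lemma} argument genuinely cannot see past $\epsilon^{3/2}$ for the $\epsilon^4\partial_2 f$-type terms, and some use of their perfect-derivative structure (your comparison with the integral) is what secures the stated $\epsilon^{5/2}$, which the later energy arguments need; the paper's write-up folds these terms into the \cref{assumption-3} summand without comment and so is coarser at this point. The only caution about your write-up is bookkeeping of non-decaying pieces: the quantities $\bigl(\tfrac m\epsilon+\sigma\bigr)\theta(\epsilon n)$, the Riemann-sum error in $n$, and $\epsilon^6\sum_{k<n}I$ each tend to nonzero constants at $+\infty$ and are individually not in $\ell^2$, so they must be kept grouped (their limits sum to zero precisely because $\sum_k a_k=0$) before taking norms; your zero-sum accounting makes this work, but it should be stated as a grouped estimate rather than term-by-term.
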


\begin{proof}
	That \(\|\mcU(0)\|_{\ell^2}\leq C \epsilon^{5/2}\) follows immediately from applying \cref{assumption-3} to \cref{u-ansatz}.
	
	We have from the definition of \(u_n\) and \(q_n\) that
	\begin{equation}\label{diff-eqn}
		\dot u_n(0) = q_{n+1}(0) - q_n(0)
	\end{equation}
	For \(q_n(0)\) to satisfy \cref{diff-eqn}, it must equal \(\sum_{k=-\infty}^{n-1} \dot u_k(0)\) (modulo a constant which we assume without loss of generality to be zero). Thus we have
	\begin{equation}\label{q0-summation}
		\begin{aligned}	
			q_n(0) =& \sum_{k=-\infty}^{n-1} \dot u_k(0)\\
			=& \sum_{k=-\infty}^{n-1}\left[ \dot u_k(0) - \epsilon^2 \partial_1 f(\epsilon k,0) - \epsilon^4 \partial_1 f(\epsilon k,0) + \epsilon^2 c \partial_1 g(\epsilon k,0) - \epsilon^4 \partial_2g(\epsilon k,0)\right] \\
			&+\sum_{k=-\infty}^{n-1}\left[  \epsilon^2 \partial_1 f(\epsilon k,0) +\epsilon^4 \partial_1 f(\epsilon k,0) - \epsilon F(\epsilon(k+1),0) +\epsilon F(\epsilon k ,0)  \right] \\
			&+ \sum_{k=-\infty}^{n-1}\left[ - \epsilon^2 c\partial_1 g(\epsilon k,0) +\epsilon^4 \partial_1 g(\epsilon k,0) - \epsilon G(\epsilon(k+1),0) +\epsilon G(\epsilon k ,0)  \right] \\
			&+ \epsilon F(\epsilon n, 0) - \epsilon F_- + \epsilon G(\epsilon n, 0).
		\end{aligned}
	\end{equation}
	Comparing \cref{q0-summation} to \cref{q-ansatz}, we have that 
	\begin{equation*}\label{mcq-zero}
		\begin{aligned}
			\mcQ_n(0) =& \sum_{k=-\infty}^{n-1}\left[ \dot u_k(0) - \epsilon^2 \partial_1 f(\epsilon k,0) - \epsilon^4 \partial_1 f(\epsilon k,0) + \epsilon^2 c \partial_1 g(\epsilon k,0) - \epsilon^4 \partial_2g(\epsilon k,0)\right] \\
			&+\sum_{k=-\infty}^{n-1}\left[  \epsilon^2 \partial_1 f(\epsilon k,0) +\epsilon^4 \partial_1 f(\epsilon k,0) - \epsilon F(\epsilon(k+1),0) +\epsilon F(\epsilon k ,0)  \right] \\
			&+ \sum_{k=-\infty}^{n-1}\left[ - \epsilon^2 c\partial_1 g(\epsilon k,0) +\epsilon^4 \partial_1 g(\epsilon k,0) - \epsilon G(\epsilon(k+1),0) +\epsilon G(\epsilon k ,0)  \right].
		\end{aligned}
	\end{equation*}
	That \(\mcQ_n(0)\to 0\) as \(n\to\infty\) is guaranteed by \cref{assumption-2}. Now \cref{ell22-lemma} can be applied to get the result if the summands are in \(\ell^2_2\) and of order \(\epsilon^{5/2}\). The first summand satisfies this condition because of \cref{assumption-3}.  Note that the latter summands are equal to \(-\epsilon^6 I_{f,1}(k,0)\) and \(-\epsilon^6 I_{g,1}(k,0)\), as defined in \cref{If1,Ig1}. This follows from the earlier calculations in \cref{residual-nonlinearity-bounds}. That \(\epsilon^6 I_{f,1}(k,0)\) and \(\epsilon^6 I_{g,2}(k,0)\) are elements of \(\ell^2_2\) follows from \(f(\cdot, 0) \in \mcX^6_2\) and \(g(\cdot, 0) \in H^6_2\) and an application of \cref{h1-ell2-ineq}.
	
	Thus we have \cref{initial-condition-ineq} where the \(C>0\) can be chosen based on the norms of \(f\) and \(g\).
\end{proof}

\section{Long-time approximation of FPUT}\label{sec:long-time-approx}

In this section, we show that any phenomenon observed in the pair of counter propagating KdV equations, given by \cref{f-mKdV,g-gKdV}, can also be observed in the FPUT lattice \cref{fput-lattice-equations-strain-variables}. In particular, our result is as follows.

\begin{theorem}\label{thm:long-time-stability}
	Let \cref{assumption-1} hold and set
	\begin{equation*}
		\delta = \max \left\{\sup_{\tau\in[-\tau_0, \tau_0]}\|f(\cdot,\tau)\|_{\mcX^6_2},\ \sup_{\tau\in[-\tau_0, \tau_0]} \|g(\cdot, \tau)\|_{H^6_2} \right\}
	\end{equation*}
	There exists positive constants \(\epsilon_0\) and \(C\) such that for all \(\epsilon \in(0,\epsilon_0)\), when initial data \((u(0), \dot u(0))\) satisfy \cref{assumption-2,assumption-3}, the unique solution \((u,q)\) to the FPU equation \cref{first-order-lattice-eqns} belongs to 
	\begin{equation*}
		C^1([-t_0(\epsilon), t_0(\epsilon)], \ell^\infty(\mathbb Z))
	\end{equation*}
	with \(t_0(\epsilon):= \epsilon^{-3}\tau_0 \) and satisfies
	\begin{equation*}
		\begin{aligned}
			&\| u(t) - \epsilon f(\epsilon(\cdot+t), \epsilon^3 t) -\epsilon g(\epsilon(\cdot -ct) ,\epsilon^3 t) \|_{\ell^2} \\
			&\quad + \| \dot u(t) - \epsilon \partial_1 f(\epsilon (\cdot +t),\epsilon^3t)  +\epsilon^2 \partial_1 g(\epsilon(\cdot - ct), \epsilon^3t)\|_{\ell^2} \leq C \epsilon^{5/2 }, \quad t\in[-t_0(\epsilon), t_0(\epsilon)].
		\end{aligned}
	\end{equation*}
\end{theorem}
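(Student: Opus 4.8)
The plan is to run a continuation (bootstrap) argument on the energy quantity $\mcE(t)$ of \cref{energy-function}, combining the three preparatory lemmas already in hand: \cref{initial-conditions-lem} controls the initial size, \cref{energy-coercive-bounds-lem} supplies both the coercivity $\|\mcQ(t)\|_{\ell^2}^2 + \|\mcU(t)\|_{\ell^2}^2 \le 4\mcE(t)$ and the differential inequality for $d\mcE/dt$, and local well-posedness of \cref{error-lattice-eqns} in $\ell^2(\Z)$ provides a maximal $C^1$ solution $(\mcU,\mcQ)$ (the cubic nonlinearity is locally Lipschitz on $\ell^2$, since $\|a^3-b^3\|_{\ell^2}\le C(\|a\|_{\ell^2}^2+\|b\|_{\ell^2}^2)\|a-b\|_{\ell^2}$). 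The target is to show $\mcE(t)\le K^2\epsilon^5$ on all of $[-\epsilon^{-3}\tau_0,\epsilon^{-3}\tau_0]$ for a suitable constant $K$; coercivity then gives $\|\mcU(t)\|_{\ell^2}+\|\mcQ(t)\|_{\ell^2}\le C\epsilon^{5/2}$, and the $\ell^2$ bound simultaneously rules out finite-time blow-up, so the maximal solution in fact covers the full interval.

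For the bootstrap, fix $K$ (to be chosen) and let $T^\ast$ be the largest time $\le\epsilon^{-3}\tau_0$ with $\mcE(t)\le K^2\epsilon^5$ on $[0,T^\ast]$; \cref{initial-conditions-lem} together with the fact that $\phi(\cdot,0)=0$ gives $\mcE(0)\le C_0\epsilon^5$, so $T^\ast>0$. On $[0,T^\ast]$ we have $\mcE^{1/2}\le K\epsilon^{5/2}$, and inserting this into the estimate of \cref{energy-coercive-bounds-lem} — using $\mcE^{1/2}\le K\epsilon^{5/2}$ on the prefactor of the forcing term and on the cubic term, and taking $\epsilon_0$ small so that $\epsilon^3\delta^2$ absorbs the $O(\epsilon^{7/2})$ corrections — yields $|d\mcE/dt|\le 2C\epsilon^3\delta^2\mcE + CK(\delta+\delta^5)\epsilon^8$ for $\epsilon<\epsilon_0$. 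Gronwall's inequality over a time interval of length $\epsilon^{-3}\tau_0$ gives $\mcE(t)\le e^{2C\delta^2\tau_0}\big(C_0+CK(\delta+\delta^5)\tau_0\big)\epsilon^5$. Since the right-hand side grows linearly in $K$ while the target $K^2\epsilon^5$ grows quadratically, one first fixes $K=K(\delta,\tau_0)$ with $e^{2C\delta^2\tau_0}(C_0+CK(\delta+\delta^5)\tau_0)<K^2$, and then fixes $\epsilon_0=\epsilon_0(K,\delta,\tau_0)$ small enough to license the above absorptions. Then $\mcE(T^\ast)<K^2\epsilon^5$, so by continuity $T^\ast$ cannot be strictly less than $\epsilon^{-3}\tau_0$; the same argument on $[-\epsilon^{-3}\tau_0,0]$ treats negative times.

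It remains to translate $\|\mcU(t)\|_{\ell^2}+\|\mcQ(t)\|_{\ell^2}\le C\epsilon^{5/2}$ into the stated conclusion. From \cref{u-ansatz}, $u_n(t)-\epsilon f(\epsilon(n+t),\epsilon^3t)-\epsilon g(\epsilon(n-ct),\epsilon^3t)=\epsilon^3\phi(\epsilon n,\epsilon t)+\mcU_n(t)$; by \cref{h1-ell2-ineq} and the bound \cref{phi-bound} (so $\|\phi(\cdot,\epsilon t)\|_{H^1}\le C\delta^3$), we get $\|\epsilon^3\phi(\epsilon\,\cdot,\epsilon t)\|_{\ell^2}\le C\epsilon^{5/2}\delta^3$, so the first difference is $O(\epsilon^{5/2})$ in $\ell^2$. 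For the velocity, $\dot u_n=q_{n+1}-q_n$; subtracting $\epsilon^2\partial_1 f(\epsilon(n+t),\epsilon^3t)-\epsilon^2\partial_1 g(\epsilon(n-ct),\epsilon^3t)$ and invoking the defining relations for $F$, $G$, $\Phi$ (the same Taylor cancellations that produced $I_{f,1}$, $I_{g,1}$ in \cref{residual-nonlinearity-bounds}, together with $c=1-\epsilon^2f_+^2/4$ from \cref{ansatz-wave-speed}) leaves $\mcQ_{n+1}-\mcQ_n$ plus terms of order $\epsilon^{7/2}$ or smaller in $\ell^2$, hence a total of order $\epsilon^{5/2}$. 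Membership of $(u,q)$ in $C^1([-t_0(\epsilon),t_0(\epsilon)],\ell^\infty(\Z))$ follows because $u-\epsilon f-\epsilon g$ and $\dot u-\epsilon^2\partial_1 f+\epsilon^2\partial_1 g$ lie in $\ell^2\subseteq\ell^\infty$, while the ansatz profiles are bounded and smooth in $t$.

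The genuine analytic content is in the preparatory estimates \cref{residual-nonlinearity-bounds,energy-coercive-bounds-lem,initial-conditions-lem}; the only real care needed here is in the order of quantifiers — choosing $K$ before $\epsilon_0$ — so that the $K$-dependent constants generated by Gronwall do not feed back into the smallness conditions, and in tracking the $O(\epsilon^4)$ discrepancies arising from $c\ne 1$ and from the $\partial_2 f$, $\partial_2 g$ corrections when matching $\dot u$ against its leading profile. I expect this last matching step — checking that every contribution other than $\mcQ_{n+1}-\mcQ_n$ and the claimed leading profile is $o(\epsilon^{5/2})$ in $\ell^2$ after \cref{h1-ell2-ineq} — to be the most tedious, though not conceptually difficult, part of the argument.
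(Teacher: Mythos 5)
Your proposal is correct and follows essentially the same route as the paper: a continuation/bootstrap argument on the energy of \cref{energy-function}, seeded by \cref{initial-conditions-lem}, driven by the differential inequality of \cref{energy-coercive-bounds-lem}, and closed by a Gronwall-type estimate over times of order \(\epsilon^{-3}\) with the bootstrap constant fixed before \(\epsilon_0\). The only differences are cosmetic — the paper runs the argument on \(\mathcal S=\mcE^{1/2}\) with an explicit integrating factor and fixes its constant via an \(\epsilon\to 0\) limit rather than your linear-versus-quadratic-in-\(K\) choice — and you additionally spell out the final translation from \((\mcU,\mcQ)\) to \((u,\dot u)\) (the \(\epsilon^3\phi\) term and the \(c\neq 1\), \(\partial_2\)-correction bookkeeping), which the paper leaves implicit.
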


\begin{proof}
	Set \(\mathcal S := \mathcal E ^{1/2}\) where \(\mathcal E\) is defined in \cref{energy-function}. From the results in \cref{initial-conditions-lem}, we get that \(\mathcal S(0) \leq C_0 \epsilon^{5/2}\) for some constant \(C_0 > 0\) and \(\epsilon_0\) as chosen in \cref{energy-coercive-bounds-lem}. For fixed constant \(C> 0\) define
	\begin{equation*}
		T_{C} := \sup \left\{T_0 \in (0,   \epsilon^{-3} \tau_0]: \mathcal S(t) \leq C \epsilon^{5/2},\, t\in [-T_0, T_0]\right\}.
	\end{equation*} 
	The goal is then to pick \(C\) so that \(T_{C} = \epsilon^{-3} \tau_0\).
	
	We have that
	\begin{equation*}
		\begin{aligned}
			\left | \frac d {dt} \mathcal S(t) \right | &= \frac 1 {2 \mathcal E ^{1/2}} \left | \frac d {dt} \mathcal E(t) \right| \\
			&\leq C_1(\delta + \delta^5) \epsilon^{11/2} + C_2 \epsilon^3\left[ \delta^2 + \epsilon^{-2}(\delta + \mathcal S) \mathcal S \right]\mathcal S
		\end{aligned}
	\end{equation*}
	where \(C_1, C_2 > 0\) are independent of \(\delta\) and \(\epsilon\). While \(|t| \leq T_{C}\),
	\begin{equation*}
		C_2 \left[ \delta^2 + \epsilon^{-2}(\delta + \mathcal S) \mathcal S \right] \leq C_2 \left[ \delta^2  + (\delta +  C\epsilon^{5/2}) C \epsilon^{1/2} \right],
	\end{equation*}
	where the right-hand side is continuous in \(\epsilon \) for \(\epsilon \in [0,\epsilon_0]\) and \(C>0\). Furthermore, the right-hand side of the inequality above is increasing in both \(\epsilon\) and \(C\), and so we can uniformly bound the term by some fixed number. Set \(K(C,\epsilon_0)= K>0\) to be
	\begin{equation*}\label{K-def-2}
		K :=  \left[ \delta^2  + (\delta +  C\epsilon_0^{5/2}) C \epsilon_0^{1/2} \right].
	\end{equation*}
	
	Hence, we can get that for \(t \in [-T_{C}, T_{C}]\) 
	\begin{equation*}
		\begin{aligned}
			\frac d {dt} e^{-\epsilon^3 K t} \mathcal S(t) &= - \epsilon^3 K e^{-\epsilon^3 K t} \mathcal S  + e^{-\epsilon^3 K t} \frac d {dt} \mathcal S \\
			&\leq - \epsilon^3 K e^{-\epsilon^3 K t} \mathcal S  + e^{-\epsilon^3 K t}C_1(\delta + \delta^5) \epsilon^{11/2} \\
			&\qquad+ e^{-\epsilon^3 K t}C_2 \epsilon^3\left[ \delta^2 + \epsilon^{-2}(\delta + \mathcal S) \mathcal S \right]\mathcal S \\
			&\leq - \epsilon^3 K e^{-\epsilon^3 K t} \mathcal S  +  e^{-\epsilon^3 K t}C_1(\delta + \delta^5) \epsilon^{11/2} + \epsilon^3 K e^{-\epsilon^3 K t}\mathcal S \\
			&= e^{-\epsilon^3 K t}C_1(\delta + \delta^5) \epsilon^{11/2}.
		\end{aligned}
	\end{equation*}
	Integrating gives
	\begin{equation*} 
		\begin{aligned}
			\mathcal S(t) &\leq \left( \mathcal S(0) + K^{-1} C_1 (\delta+\delta^5) \epsilon^{5/2} \right) e^{\epsilon^3 K t} - \epsilon^{5/2} K^{-1} C_1 (\delta + \delta^5) \\
			&\leq \left(C_0+ K^{-1} C_1 (\delta+\delta^5)  \right) \epsilon^{5/2}e^{\epsilon^3 K t} \\
			&\leq (C_0 + K^{-1} C_1 (\delta+\delta^5) ) e^{K\tau_0} \epsilon^{5/2}
		\end{aligned}
	\end{equation*}
	for \(t \in [-T_{C}, T_{C}]\). If we have 
	\begin{equation}\label{C-ineq-bound}
		(C_0 + K^{-1} C_1 (\delta+\delta^5) ) e^{K\tau_0}  \leq C
	\end{equation} 
	then we can conclude that \(T_C = \epsilon^{-3}\tau_0.\) Note that the left-hand side of the inequality goes to 
	\begin{equation*}
		(C_0 + \delta^{-2} C_1 (\delta+\delta^5) ) e^{\delta^2\tau_0}
	\end{equation*}
	as \(\epsilon \to 0\) for fixed values of \(C\). Thus choose \(C>0\) large enough so that
	\begin{equation*}
		(C_0 + \delta^{-2} C_1 (\delta+\delta^5) ) e^{\delta^2\tau_0} < C
	\end{equation*}
	and then we can make \(\epsilon_0\) sufficiently small so that \cref{C-ineq-bound} holds for all \(\epsilon \in (0,\epsilon_0]\).
\end{proof}

\section{Meta-stability of kink-like solutions}\label{sec:meta}

We would now like to apply a similar method as seen in \cite{khan2017long} to show that the approximations hold for time scales of order \(\mcO(\epsilon^{-3}|\log(\epsilon)|)\). This is a useful result because one can then make conclusions about the meta-stability of the kink-like solution on the FPUT from the stability of the kink solution for the mKdV. 

However, we cannot use the full approximation with the counter-propagating solutions. The problem comes from trying to extend \cref{assumption-1}. To make sure \(\phi\) remains bounded for longer period of times, we need to assume that \(f\) and \(g\) remain localized for longer and longer times. However, the PDEs \cref{f-mKdV} and \cref{g-gKdV} are dispersive, and so generic solutions will become less localized over time resulting in larger norms in \(\mcX^6_2\) and \(H^6_2\).

The localization assumption is only necessary to keep \(\phi\), the term coming from the coupling of \(f\) and \(g\), bounded. We can drop this assumption if we set \(g\) identically equal to zero. It is easy to see that if \(g=0\) then \(\phi = 0\). Also, one can check that the estimates of the residuals and nonlinear terms rely only on \(f \in \mcX^6\) if \(\phi = 0\), and so our estimates from before still hold in this case.

\begin{assum}\label{assumption-4}
	Let \(f\) be a solution to \cref{f-mKdV} and set \(g = 0\). Assume that 
	\begin{equation*}
		f \in C_b(\R, \mcX^6(\R)).
	\end{equation*}
	Furthermore, assume that \(f\) has fixed limits in its spatial variables at \(\pm \infty\) given by \(f_{\pm}\).
\end{assum}

We will still need to assume that the initial condition of \(f\) is localized as in \cref{assumption-3}, but this assumption holds for many solutions including the kink solutions of \cref{f-mKdV}.

The following result and proof are analogous to those of \cite{khan2017long}*{Thm.~1}. The idea behind the proof is to sacrifice some accuracy in the approximation (so that the error is \(\mcO(\epsilon^{5/2 - r})\)) in order to extend the time that the approximation holds (which will now be \(\mcO(\epsilon^{-3} |\log(\epsilon)|)\)).

\begin{theorem}\label{thm:meta-stable}
	Let \cref{assumption-4} hold and set 
	\begin{equation*}
		\delta =\sup_{\tau\in\R}\|f(\cdot,\tau)\|_{\mcX^6}
	\end{equation*}
	For fixed \(r\in(0,1/2)\), there exists positive constants \(\epsilon_0\), \(C\), and \(K\) such that for all \(\epsilon \in(0,\epsilon_0)\), when initial data \((u(0), \dot u(0))\) satisfy \cref{assumption-2,assumption-3}, the unique solution \((u,q)\) to the FPU equation \cref{first-order-lattice-eqns} belongs to 
	\begin{equation*}
		C^1([-t_0(\epsilon), t_0(\epsilon)], \ell^\infty(\mathbb Z))
	\end{equation*}
	with \(t_0(\epsilon):= r K^{-1} \epsilon^{-3} | \log (\epsilon) | \) and satisfies
	\begin{equation*}
		 \| u(t) - \epsilon f(\epsilon(\cdot+t), \epsilon^3 t) \|_{\ell^2} + \| \dot u(t) - \epsilon \partial_1 f(\epsilon (\cdot +t),\epsilon^3t)  \|_{\ell^2} \leq C \epsilon^{5/2 - r}, \quad t\in[-t_0(\epsilon), t_0(\epsilon)].
	\end{equation*}
\end{theorem}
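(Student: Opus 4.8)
The plan is to set $g\equiv0$ (hence $\phi\equiv0$), and re-run the energy argument of \Cref{thm:long-time-stability} while tracking the $\epsilon$-dependence carefully enough to reach the longer time scale. As noted in the text, with $g=\phi=0$ the residuals from \Cref{residual-nonlinearity-bounds} depend only on $\|f\|_{\mcX^6}$, so $\|\ResI\|_{\ell^2}+\|\ResII\|_{\ell^2}\le C\epsilon^{11/2}(\delta+\delta^5)$ and $\|\mcB_n\|_{\ell^2}\le C\epsilon[\delta\|\mcU\|_{\ell^2}^2+\|\mcU\|_{\ell^2}^3]$, with $\delta=\sup_{\tau\in\R}\|f(\cdot,\tau)\|_{\mcX^6}$ now finite by \Cref{assumption-4}. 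Likewise the energy $\mcE$ from \cref{energy-function} (with $g=\phi=0$) is coercive as in \Cref{energy-coercive-bounds-lem}, and $\mcS:=\mcE^{1/2}$ satisfies a differential inequality of the shape
\begin{equation*}
	\left|\frac{d}{dt}\mcS(t)\right|\le C_1(\delta+\delta^5)\epsilon^{11/2}+C_2\epsilon^3\big[\delta^2+\epsilon^{-2}(\delta+\mcS)\mcS\big]\mcS .
\end{equation*}
By \Cref{initial-conditions-lem} (which only needs $f(\cdot,0)\in\mcX^6_2$, available from \Cref{assumption-3}), $\mcS(0)\le C_0\epsilon^{5/2}$.

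The difference with \Cref{thm:long-time-stability} is that we now allow the error to grow to size $\mcO(\epsilon^{5/2-r})$ rather than staying $\mcO(\epsilon^{5/2})$. So I would define, for a constant $C>0$ to be chosen,
\begin{equation*}
	T_C:=\sup\big\{T_0>0:\ \mcS(t)\le C\epsilon^{5/2-r},\ t\in[-T_0,T_0]\big\},
\end{equation*}
and aim to show $T_C\ge rK^{-1}\epsilon^{-3}|\log\epsilon|$ for suitable $K$. While $|t|\le T_C$ we have $\mcS\le C\epsilon^{5/2-r}$, so the bracket $C_2[\delta^2+\epsilon^{-2}(\delta+\mcS)\mcS]$ is bounded by $C_2[\delta^2+(\delta+C\epsilon^{5/2-r})C\epsilon^{1/2-r}]$, which for $r<1/2$ and $\epsilon$ small is bounded by a fixed constant; call $\epsilon^3$ times it $\epsilon^3 K$. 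The linear-in-$\mcS$ term then gives the Gronwall factor $e^{\epsilon^3 K|t|}$, and integrating the inequality $\frac{d}{dt}(e^{-\epsilon^3 K t}\mcS)\le e^{-\epsilon^3 K t}C_1(\delta+\delta^5)\epsilon^{11/2}$ yields
\begin{equation*}
	\mcS(t)\le\big(C_0+K^{-1}C_1(\delta+\delta^5)\big)\epsilon^{5/2}\,e^{\epsilon^3 K|t|}
\end{equation*}
for $|t|\le T_C$.

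Now comes the point where the longer time scale is bought: for $|t|\le t_0(\epsilon)=rK^{-1}\epsilon^{-3}|\log\epsilon|$ we get $e^{\epsilon^3 K|t|}\le e^{r|\log\epsilon|}=\epsilon^{-r}$, so $\mcS(t)\le(C_0+K^{-1}C_1(\delta+\delta^5))\,\epsilon^{5/2-r}$. Choosing $C:=2(C_0+K^{-1}C_1(\delta+\delta^5))$ (say) makes the right-hand side $\le \tfrac12 C\epsilon^{5/2-r}<C\epsilon^{5/2-r}$, so a standard continuity/bootstrap argument forces $T_C\ge t_0(\epsilon)$ — the bound cannot first be saturated at any $|t|\le t_0(\epsilon)$. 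One must check the mild circularity that $K$ depends on $C$ and $C$ depends on $K$: since $K$ is monotone in $C$ and, as $\epsilon\to0$, $K\to K_\infty:=C_2\delta^2$ independently of $C$, one first fixes $C$ using $K_\infty$ in place of $K$, then shrinks $\epsilon_0$ so that the actual $K(C,\epsilon_0)$ is close enough to $K_\infty$ that all the inequalities above survive. Finally, with $\mcS(t)\le C\epsilon^{5/2-r}$ in hand, coercivity \cref{coercive-bound} gives $\|\mcU(t)\|_{\ell^2}+\|\mcQ(t)\|_{\ell^2}\le 2C\epsilon^{5/2-r}$, and translating back through the ansatz \cref{u-ansatz,q-ansatz} with $g=\phi=0$ (using $\|\mcU\|_{\ell^2}$ to control $\|u(t)-\epsilon f\|_{\ell^2}$ directly, and $\|\mcQ\|_{\ell^2}$ together with the elementary estimate relating $\dot u_n$ to $q_{n+1}-q_n$ and the Taylor expansion $\epsilon F(\epsilon(n+t))-\epsilon F(\epsilon(n-1+t))=\epsilon^2\partial_1 f+\mcO(\epsilon^3)$ to control $\|\dot u(t)-\epsilon\partial_1 f\|_{\ell^2}$) yields the stated bound; existence and uniqueness in $C^1([-t_0,t_0],\ell^\infty)$ follow from local well-posedness of \cref{first-order-lattice-eqns} in $\ell^2$ plus the a priori bound preventing blow-up on this interval, together with $\ell^2\hookrightarrow\ell^\infty$.

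\emph{Main obstacle.} The only genuinely delicate point is the balance of constants giving the $|\log\epsilon|$ gain: one needs $r<1/2$ precisely so that the term $\epsilon^{-2}(\delta+\mcS)\mcS$, which is the nonlinear feedback of the error into its own growth, stays bounded when $\mcS$ is as large as $C\epsilon^{5/2-r}$ (it is of order $\epsilon^{1/2-r}$, which $\to0$ iff $r<1/2$); this is what caps the attainable time scale at $\mcO(\epsilon^{-3}|\log\epsilon|)$ and what pins down $K$. Everything else is a transcription of the $g=\phi=0$ specializations of \Cref{residual-nonlinearity-bounds}, \Cref{energy-coercive-bounds-lem}, and \Cref{initial-conditions-lem}, which the excerpt has already set up for exactly this purpose.
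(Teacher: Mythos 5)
Your proposal is correct and follows essentially the same argument as the paper: the same energy quantity and Gronwall bootstrap, the same use of the threshold $C\epsilon^{5/2-r}$ with $r<1/2$ to keep the nonlinear feedback term bounded, and the same $e^{K\epsilon^3|t|}\le\epsilon^{-r}$ trade-off yielding the $\mcO(\epsilon^{-3}|\log\epsilon|)$ time scale. Your resolution of the $C$--$K$ interdependence (fixing $C$ via the limiting value of $K$ and then enlarging $K$ and shrinking $\epsilon_0$) is just a cosmetic reordering of the paper's choice of constants.
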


\begin{proof}
	
	Set \(\mathcal S := \mathcal E ^{1/2}\) where \(\mathcal E\) is defined in \cref{energy-function}. From the results in \cref{initial-conditions-lem}, we get that \(\mathcal S(0) \leq C_0 \epsilon^{5/2}\) for some constant \(C_0 > 0\) and \(\epsilon_0\) as chosen in \cref{energy-coercive-bounds-lem}. For fixed constants \(r\in(0,1/2)\), \(C> C_0\), and \(K > 0\), define the maximal continuation time by 
	\begin{equation*}
		T_{C,K,r} := \sup \left\{T_0 \in (0, r K^{-1} \epsilon^{-3} |\log(\epsilon)|]: \mathcal S(t) \leq C \epsilon^{5/2 -r}, t\in [-T_0, T_0]\right\}.
	\end{equation*} 
	We also define the maximal evolution time of the mKdV equation as \(\tau_0(\epsilon) = rK^{-1}|\log(\epsilon)|\). The goal is then to pick \(C\) and \(K\) so that \(T_{C,K,r} = \epsilon^{-3} \tau_0(\epsilon)\).
	
	We have that
	\begin{equation*}
		\begin{aligned}
			\left | \frac d {dt} \mathcal S(t) \right | &= \frac 1 {2 \mathcal E ^{1/2}} \left | \frac d {dt} \mathcal E(t) \right| \\
			&\leq C_1(\delta + \delta^5) \epsilon^{11/2} + C_2 \epsilon^3\left[ \delta^2 + \epsilon^{-2}(\delta + \mathcal S) \mathcal S \right]\mathcal S
		\end{aligned}
	\end{equation*}
	where \(C_1, C_2 > 0\) are independent of \(\delta\) and \(\epsilon\). While \(|t| \leq T_{C,K,r}\),
	\begin{equation*}
		C_2 \left[ \delta^2 + \epsilon^{-2}(\delta + \mathcal S) \mathcal S \right] \leq C_2 \left[ \delta^2  + \epsilon^{-2}(\delta +  C\epsilon^{5/2-r}) C \epsilon^{5/2-r} \right],
	\end{equation*}
	where the right-hand side is continuous in \(\epsilon \) for \(\epsilon \in [0,\epsilon_0]\). Thus the right-hand side can be uniformly bounded by a constant independent of \(\epsilon\). Choose \(K>0\) (dependent on \(C\)) sufficiently large so that 
	\begin{equation}\label{K-def}
		C_2 \left[ \delta^2  + \epsilon^{-2}(\delta +  C\epsilon^{5/2-r}) C \epsilon^{5/2-r} \right] \leq K.
	\end{equation}
	
	Hence, we can get that for \(t \in [-T_{C,K,r}, T_{C,K,r}]\) 
	\begin{equation*}
		\begin{aligned}
			\frac d {dt} e^{-\epsilon^3 K t} \mathcal S(t) &= - \epsilon^3 K e^{-\epsilon^3 K t} \mathcal S  + e^{-\epsilon^3 K t} \frac d {dt} \mathcal S \\
			&\leq - \epsilon^3 K e^{-\epsilon^3 K t} \mathcal S  + e^{-\epsilon^3 K t}C_1(\delta + \delta^5) \epsilon^{11/2} \\
			&\qquad+ e^{-\epsilon^3 K t}C_2 \epsilon^3\left[ \delta^2 + \epsilon^{-2}(\delta + \mathcal S) \mathcal S \right]\mathcal S \\
			&\leq - \epsilon^3 K e^{-\epsilon^3 K t} \mathcal S  +  e^{-\epsilon^3 K t}C_1(\delta + \delta^5) \epsilon^{11/2} + \epsilon^3 K e^{-\epsilon^3 K t}\mathcal S \\
			&= e^{-\epsilon^3 K t}C_1(\delta + \delta^5) \epsilon^{11/2}.
		\end{aligned}
	\end{equation*}
	Integrating gives
	\begin{equation*} 
		\begin{aligned}
			\mathcal S(t) &\leq \left( \mathcal S(0) + K^{-1} C_1 (\delta+\delta^5) \epsilon^{5/2} \right) e^{\epsilon^3 K t} - \epsilon^{5/2} K^{-1} C_1 (\delta + \delta^5) \\
			&\leq \left( \mathcal S(0) + K^{-1} C_1 (\delta+\delta^5) \epsilon^{5/2} \right) e^{\epsilon^3 K t} \\
			&\leq \left( \mathcal S(0) + K^{-1} C_1 (\delta+\delta^5) \epsilon^{5/2} \right) e^{ K \tau_0(\epsilon)} \\
			&\leq \left( C_0 + K^{-1} C_1 (\delta+\delta^5)  \right) \epsilon^{5/2 -r}
		\end{aligned}
	\end{equation*}
	for \(t \in [-T_{C,K,r}, T_{C,K,r}]\), where the last line follows in part from the definition of \(\tau_0(\epsilon)\). Now choose \(C> C_0\) sufficiently large so that 
	\begin{equation*}
		C_0 + K^{-1} C_1(\delta + \delta^5) \leq C.
	\end{equation*}
	Note that our earlier choice of \(K\) can be enlarged so that \cref{K-def} still holds as well as the above inequality. Therefore, with these choices of \(C\) and \(K\), the maximal interval can be extended to \(T_{C,K,r} = \epsilon^{-3} \tau_0(\epsilon) \). 
\end{proof}

One might assume that the earlier result in \cref{thm:long-time-stability} can be used to determine stability results of the kink-like solution of the FPUT by using stability results of the kink solution of the defocusing mKdV. It has been previously shown that the kink solution of the defocusing mKdV is \(H^1\) orbitally stable \cite{zhidkov2001korteweg}*{Thm.~III.2.4}, as well as \(H^1_{\mathrm{loc}}\) asymptotically stable \cite{merle20032}*{Cor. ~4.4}. However applying these results directly with this theorem leads to an unsatisfying answer. Since the approximation holds only for times \([-\epsilon^{-3} \tau_0, \epsilon^{-3} \tau_0]\) for the FPUT, we are only looking at a fixed slice of time \([-\tau_0, \tau_0 ]\) for the solution \(f\) of the mKdV. Thus regardless of the stability results, the solution \(f\) will remain a fixed distance from the kink solution for any choice of \(\epsilon > 0\). In particular, \(f\) would not be approaching the kink solution, and so we would not have any corresponding asymptotic stability result for the kink-like solution.

\Cref{thm:meta-stable} can be applied to discuss stability results of the kink-like solutions of the FPUT. This result extends the approximation beyond the natural time scale \(\mcO(\epsilon^{-3})\) that one would naively expect to the larger time scale \(\mcO(\epsilon^{-3} |\log(\epsilon)|)\), so that we may observe \(f\) on a slice of time of order \(|\log(\epsilon)|\). So if \(f\) approaches the kink solution in \(H^1_{\mathrm{loc}}\), then the ansatz should approach the solution given by the kink on the FPUT, given \(\epsilon>0\) is chosen small enough. Furthermore, since the ansatz from the kink solution remains close to the kink-like solution found in \cite{norton2023kinklike}, we have the desired meta-stability result for the kink-like solutions.

\appendix

\section{Proofs of lemmas}\label{lemma-appendix}

\begin{proof}[Proof of \cref{prod-rule-1-lem}]
	The result follows from induction on \(k\).
	
	For \(k = 0\), we have
	\begin{equation}
		\| f g \|_{H^0} \leq \| f \|_{L^\infty} \| g\|_{H^0}.
	\end{equation}
	
	Assuming \cref{prod_rule} holds for \(k\geq 0\), we have that 
	\begin{align*}
		\| f g \|_{H^{k+1}} \leq C \left( \| f g \|_{H^k} + \| \partial^{k+1}(fg) \|_{L^2}\right) \\
		\leq C \left( \| f\|_{\mathcal X^k} \| g \|_{H^k} + \| \partial^{k+1}(fg) \|_{L^2} \right),
	\end{align*}
	where the second term can be bounded by 
	\begin{align*}
		\| \partial^{k+1}(fg) \|_{L^2} &\leq \| \partial^k(f' g ) \|_{L^2} + \| \partial^k(f g') \|_{L^2} \\
		&\leq \| f' g \|_{H^k} + \| f g' \|_{H^k} \\
		&\leq \| f'\|_{H^k} \|g\|_{H^k} + \|f\|_{\mathcal X^k} \|g'\|_{H^k} \\
		&\leq \| f \|_{\mathcal X^{k+1}} \| g\|_{H^{k+1}} + \|f \|_{\mathcal X^{k+1}} \|g\|_{H^{k+1}} \\
		&= 2  \| f \|_{\mathcal X^{k+1}} \| g\|_{H^{k+1}}.
	\end{align*}
	This completes the induction.
\end{proof}

\begin{proof}[Proof of \cref{prod-rule-2-lem}]
	Using the result from \cref{prod-rule-1-lem}, we have 
	\begin{equation}
		\begin{aligned}
			\| f g \|_{\mcX^k} &\leq \| f g\|_{L^\infty} + \| (fg)' \|_{H^{k-1}} \\
			&\leq \|f \|_{L^\infty} \| g \|_{L^\infty} + \|f'g \|_{H^{k-1}} + \| fg' \|_{H^{k-1}} \\
			&\leq \|f \|_{L^\infty} \| g \|_{L^\infty} + C\|f'\|_{H^{k-1}} \|g \|_{\mcX^{k-1}} + \| f \|_{\mcX^{k-1}} \|g'\|_H^{k-1} \\
			&\leq C \|f \|_{\mcX^k} \|g\|_{\mcX^k} .
		\end{aligned}
	\end{equation}
\end{proof}

\begin{proof}[Proof of \cref{Ck-bound}]
	The main argument of the proof is given by showing the following claim holds:
	
	\emph{Claim}: For each integer \(k\geq 0\), \[\frac{\partial^k}{\partial x^k} \left[\frac 1 {\langle x+\tau\rangle_+^2 \langle x -c\tau \rangle^2}\right]\] is a sum of terms of the form 
	\begin{equation}\label{term}
		\frac C {\langle x +\tau\rangle_+^{2+m} \langle x -c\tau \rangle^{2+m}}\langle x + \tau \rangle_+^{m_1}\langle x-c\tau\rangle^{m_2} F(x,\tau),
	\end{equation} where \(C\neq 0\) is a constant, \(m,m_1,m_2\) are integers, \(0\leq m_1, m_2 \leq m\), and \(F\in C^n_b(\R\times\R)\) for every \(n\in \mathbb N\).
	
	This can be proved inductively. We have the \(k = 0\) case immediately by setting \(C = 1\), \(m=m_1=m_2 = 0\), and \(F(x) = 1\). Now we assume that the claim holds for \(k\geq 0\). To get the form of the \((k+1)^\text{st}\) derivative, we can use linearity and look at the derivative of each term of the form \cref{term}. That is, the \((k+1)^\text{st}\) derivative is a sum of terms of the form 
	\begin{equation}\label{term2}
		\frac{\partial}{\partial x}\left[\frac C {\langle x +\tau\rangle_+^{2+m} \langle x -c\tau \rangle^{2+m}}\langle x + \tau \rangle_+^{m_1}\langle x-c\tau\rangle^{m_2} F(x,\tau)\right].
	\end{equation}
	Applying the product rule to \cref{term2} gives us 
	\begin{align*}
		\frac{\partial}{\partial x}\Bigg[&\frac C {\langle x +\tau\rangle_+^{2+m} \langle x -c\tau \rangle^{2+m}} \langle x + \tau \rangle_+^{m_1}\langle x-c\tau\rangle^{m_2} F(x,\tau)\Bigg] = \\
		&\qquad\quad \underbrace{\frac{\partial}{\partial x}\left[\frac C {\langle x +\tau\rangle_+^{2+m} \langle x -c\tau \rangle^{2+m}}\right]\langle x + \tau \rangle_+^{m_1}\langle x-c\tau\rangle^{m_2} F(x,\tau)}_{I} \\
		&\qquad+ \underbrace{\frac C {\langle x +\tau\rangle_+^{2+m} \langle x -c\tau \rangle^{2+m}}\frac{\partial}{\partial x}\left[\langle x + \tau \rangle_+^{m_1}\right]\langle x-c\tau\rangle^{m_2} F(x,\tau)}_{II} \\
		&\qquad+ \underbrace{\frac C {\langle x +\tau\rangle_+^{2+m} \langle x -c\tau \rangle^{2+m}}\langle x + \tau \rangle_+^{m_1}\frac{\partial}{\partial x}\left[\langle x-c\tau\rangle^{m_2}\right] F(x,\tau)}_{III} \\
		&\qquad+ \underbrace{\frac C {\langle x +\tau\rangle_+^{2+m} \langle x -c\tau \rangle^{2+m}}\langle x + \tau \rangle_+^{m_1}\langle x-c\tau\rangle^{m_2} \frac{\partial}{\partial x}\left[F(x,\tau)\right] }_{IV}.
	\end{align*}
	
	We now go term-by-term. For the first term, we have
	\begin{align*}
		I =& \frac {-(2+m)C} {\langle x +\tau\rangle_+^{2+(m+1)} \langle x -c\tau \rangle^{2+(m+1)}}\langle x + \tau \rangle_+^{m_1+1}\langle x+\tau\rangle^{m_2} \Big(\langle x-c\tau\rangle_+'F(x,\tau)\Big) \\
		&\quad-\frac {(2+m)C} {\langle x +\tau\rangle_+^{2+(m+1)} \langle x -c\tau \rangle^{2+(m+1)}}\langle x + \tau \rangle_+^{m_1}\langle x-c\tau\rangle^{m_2+1} \Big(\langle x-c\tau\rangle'F(x,\tau)\Big),
	\end{align*}
	where \(\langle \cdot \rangle'\) denotes the derivative of \(\langle \cdot \rangle\). It's clear that both of these are of the form in \cref{term}.
	
	Also, we have 
	\begin{align*}
		II = \frac {Cm_1} {\langle x +\tau\rangle_+^{2+m} \langle x -c\tau \rangle^{2+m}}\langle x + \tau \rangle_+^{m_1-1}\langle x-c\tau\rangle^{m_2} \Big( \langle x+\tau\rangle_+'F(x,\tau)\Big).
	\end{align*}
	The above is again of the form in \cref{term} (and a similar result holds for \(III\)). Finally,
	\begin{equation*}
		IV = \frac C {\langle x +\tau\rangle_+^{2+m} \langle x -c\tau \rangle^{2+m}}\langle x + \tau \rangle_+^{m_1}\langle x-c\tau\rangle^{m_2} \frac{\partial F}{\partial x}(x,\tau),
	\end{equation*}
	which of the form in \cref{term}.
	
	This shows that the \((k+1)^\text{st}\) derivative is a sum of terms of the form in \cref{term} and proves the claim.
	
	Now the proposition can be proved fairly straight-forwardly from the claim. The \(k^\text{th}\) derivative is a sum of terms of the form in \cref{term}, each of which can be bounded as
	\begin{align*}
		\left|\frac C {\langle x +\tau\rangle_+^{2+m} \langle x -c\tau \rangle^{2+m}}\langle x + \tau \rangle_+^{m_1}\langle x-c\tau\rangle^{m_2} F(x,\tau)\right| \\
		\leq C \| F \|_{C^0(\R\times \R)} \sup_{x\in\mathbb R} \frac 1 {\langle x+\tau\rangle_+^2 \langle x -c\tau \rangle^2}.
	\end{align*}
	The constant in \cref{Ck-bound} can be chosen to be the sum of the constants in the above inequality. Note that there is no \(\tau\) dependence since we are taking the supremum of \(F\) over all \(x\) and \(\tau\).
	
	The result in \cref{sup-integrable} follows from 
	\begin{equation*}
		\sup_{x\in\R} \frac 1 {\langle x+\tau\rangle_+^2 \langle x -c\tau \rangle^2} = \mathcal O (1/\tau^2)
	\end{equation*}
	as \(\tau\to \infty\).
\end{proof}

\begin{proof}[Proof of \cref{ell22-lemma}]
	Let \(E_n := \{k \in \Z \mid k \leq n\}\) so that the characteristic function \(\chi_{E_n}\) satisfies
	\begin{equation*}
		\chi_{E_n}(k) = \begin{cases}1, & k \leq n \\ 0, & k > 0\end{cases}.
	\end{equation*}
	Then applying the Cauchy-Schwarz inequality, we get that
	\begin{align*}
		\left| \sum_{k=-\infty}^n a_k \right| &= \left| \sum_{k=-\infty}^\infty \langle k\rangle ^2a_k \frac{\chi_{E_n}(k)}{\langle k \rangle^2} \right|  \\
		&\leq \| a\|_{\ell^2_2} \left( \sum_{k=-\infty}^\infty \frac{\chi_{E_n}(k)}{\langle k \rangle^4}\right)^{1/2} \\
		&= \| a\|_{\ell^2_2} \left( \sum_{k=-\infty}^n \frac{1}{\langle k \rangle^4}\right)^{1/2}.
	\end{align*}
	By comparing the final sum to the integral \(\int_{-\infty}^n 1/ \langle x\rangle^4 \, dx\), we have that there is a constant \(C>0\) independent of \(a\) such that
	\begin{equation*}
		\left| \sum_{k=-\infty}^n a_k \right|  \leq C \| a\|_{\ell^2_2} \times \frac 1 {\langle n\rangle^{3/2}}
	\end{equation*}
	for \(n\leq 0\). By noting that \(\sum_{k=-\infty}^n a_k = -\sum_{k=n+1}^\infty a_k\), an identical argument can be applied to get that 
	\begin{equation*}
		\left| \sum_{k=n}^\infty a_k \right|  \leq C \| a\|_{\ell^2_2} \times \frac 1 {\langle n\rangle ^{3/2}}
	\end{equation*}
	for \(n \geq 0\). Therefore,
	\begin{equation*}
		\|b \|_{\ell^2} \leq C \left( \sum_{n=-\infty}^\infty \frac{1}{\langle n\rangle^{3}}\right)^{1/2} \| a \|_{\ell^2_2}.
	\end{equation*}
\end{proof}

	\bibliography{LongTimeStability.bib}

@article{khan2017long,
	title={Long-time stability of small {FPU} solitary waves},
	author={Khan, Amjad and Pelinovsky, Dmitry E},
	journal={Discrete \& Continuous Dynamical Systems},
	volume={37},
	number={4},
	pages={2065},
	year={2017},
	publisher={American Institute of Mathematical Sciences}
}

@article{friesecke1999solitary,
	title={Solitary waves on {FPU} lattices: I. Qualitative properties, renormalization and continuum limit},
	author={Friesecke, Gero and Pego, Robert L},
	journal={Nonlinearity},
	volume={12},
	number={6},
	pages={1601},
	year={1999},
	publisher={IOP Publishing}
}

@article{friesecke2002solitary,
	title={Solitary waves on {FPU} lattices: {II}. Linear implies nonlinear stability},
	author={Friesecke, Gero and Pego, Robert L},
	journal={Nonlinearity},
	volume={15},
	number={4},
	pages={1343},
	year={2002},
	publisher={IOP Publishing}
}

@article{friesecke2003solitary,
	title={Solitary waves on {F}ermi--{P}asta--{U}lam lattices: {III}. {H}owland-type floquet theory},
	author={Friesecke, Gero and Pego, Robert L},
	journal={Nonlinearity},
	volume={17},
	number={1},
	pages={207},
	year={2003},
	publisher={IOP Publishing}
}

@article{friesecke2004solitary,
	title={Solitary waves on {F}ermi--{P}asta--{U}lam lattices: {IV}. Proof of stability at low energy},
	author={Friesecke, Gero and Pego, Robert L},
	journal={Nonlinearity},
	volume={17},
	number={1},
	pages={229},
	year={2004},
	publisher={IOP Publishing}
}

@incollection{schneider2000counter,
	title={Counter-propagating waves on fluid surfaces and the continuum limit of the {Fermi-Pasta-Ulam} model},
	author={Schneider, Guido and Wayne, C Eugene},
	booktitle={Equadiff 99: (In 2 Volumes)},
	pages={390--404},
	year={2000},
	publisher={World Scientific}
}

@article{dumas2014justification,
	title={Justification of the log--{KdV} Equation in Granular Chains: The Case of Precompression},
	author={Dumas, Eric and Pelinovsky, Dmitry},
	journal={SIAM Journal on Mathematical Analysis},
	volume={46},
	number={6},
	pages={4075--4103},
	year={2014},
	publisher={SIAM}
}

@article{hong2021korteweg,
	title={On the {K}orteweg--de {V}ries Limit for the {F}ermi--{P}asta--{U}lam System},
	author={Hong, Younghun and Kwak, Chulkwang and Yang, Changhun},
	journal={Archive for Rational Mechanics and Analysis},
	volume={240},
	number={2},
	pages={1091--1145},
	year={2021},
	publisher={Springer}
}

@article{pace2019beta,
	title={The $\beta$ {F}ermi-{P}asta-{U}lam-{T}singou recurrence problem},
	author={Pace, Salvatore D and Reiss, Kevin A and Campbell, David K},
	journal={Chaos: An Interdisciplinary Journal of Nonlinear Science},
	volume={29},
	number={11},
	pages={113107},
	year={2019},
	publisher={AIP Publishing LLC}
}

@article{iooss2000travelling,
	title={Travelling waves in the {F}ermi-{P}asta-{U}lam lattice},
	author={Iooss, G{\'e}rard},
	journal={Nonlinearity},
	volume={13},
	number={3},
	pages={849},
	year={2000},
	publisher={IOP Publishing}
}

@article{merle20032,
	title={$L^2$ stability of solitons for {KdV} equation},
	author={Merle, Franck and Vega, Luis},
	journal={International Mathematics Research Notices},
	volume={2003},
	number={13},
	pages={735--753},
	year={2003},
	publisher={OUP}
}

@phdthesis{norton2023kinklike, 
	author={Norton, Trevor}, 
	title={Kink-like Solutions for the {FPUT} Lattice and the {mKdV} as a Modulation Equation}, 
	publisher={Boston University}, 
	address={Boston, MA},
	year={2023} }

@book{zhidkov2001korteweg,
	title={{Korteweg-de Vries} and nonlinear {S}chr{\"o}dinger equations: qualitative theory},
	author={Zhidkov, Peter E},
	number={1756},
	year={2001},
	publisher={Springer Science \& Business Media}
}

@article{schwetlick2009existence,
	title={Existence of dynamic phase transitions in a one-dimensional lattice model with piecewise quadratic interaction potential},
	author={Schwetlick, Hartmut and Zimmer, Johannes},
	journal={SIAM journal on mathematical analysis},
	volume={41},
	number={3},
	pages={1231--1271},
	year={2009},
	publisher={SIAM}
}

@article{truskinovsky2005kinetics,
	author = {Truskinovsky, Lev and Vainchtein, Anna},
	title = {Kinetics of Martensitic Phase Transitions: Lattice model},
	journal = {SIAM Journal on Applied Mathematics},
	volume = {66},
	number = {2},
	pages = {533-553},
	year = {2005},
	doi = {10.1137/040616942}
	}

@article{herrmann2010heteroclinic,
	title={Heteroclinic travelling waves in convex {FPU}-type chains},
	author={Herrmann, Michael and Rademacher, Jens DM},
	journal={SIAM Journal on Mathematical Analysis},
	volume={42},
	number={4},
	pages={1483--1504},
	year={2010},
	publisher={SIAM}
}

@article{herrmann2011action,
	title={Action minimising fronts in general{ FPU}-type chains},
	author={Herrmann, Michael},
	journal={Journal of nonlinear science},
	volume={21},
	pages={33--55},
	year={2011},
	publisher={Springer}
}

@article{gorbushin2020supersonic,
	title={Supersonic kinks and solitons in active solids},
	author={Gorbushin, Nikolai and Truskinovsky, Lev},
	journal={Philosophical Transactions of the Royal Society A},
	volume={378},
	number={2162},
	pages={20190115},
	year={2020},
	publisher={The Royal Society Publishing}
}

@article{vainchtein2022solitary,
	title={Solitary waves in {FPU}-type lattices},
	author={Vainchtein, Anna},
	journal={Physica D: Nonlinear Phenomena},
	pages={133252},
	year={2022},
	publisher={Elsevier}
}

@article{Kappeler:2008,
	title={Solutions of m{K}d{V} in Classes of Functions Unbounded at Infinity},
	author={Kappeler, T. and Perry, P. and Shubin, M. and Topalov, P.},
	journal={Journal of Geometric Analysis},
	volume={18},
	pages={443–477},
	year={2008}
}

@article{Kato:1983,
	title={On the {C}auchy Problem for the(generalized){K}orteweg-de {V}ries Equation},
	author={Kato, Tosio},
	journal={Studies in Appl. Math. Ad. in Math. Suppl. Stud.},
	volume={8},
	pages-{93-128},
	year={1983}
}

\end{document}